\newtheorem{theorem}{Theorem}
\newtheorem{proposition}{Proposition}
\newtheorem{lemma}{Lemma}
\newtheorem{corollary}{Corollary}
\theoremstyle{definition}
\newtheorem{remark}{Remark}
\titleformat{\paragraph}[runin]{\normalsize\itshape}{}{15pt}{}
\titlespacing{\paragraph}{0pt}{24pt}{5pt}
\titleformat{\section}[block]{\normalsize\bfseries\boldmath}{\thesection}{3.5pt}{}
\titleformat{\subsection}[block]{\normalsize\upshape}{\thesubsection}{3.5pt}{}
  \patchcmd{\ttlh@hang}{\parindent\z@}{\parindent\z@\leavevmode}{}{}%
  \patchcmd{\ttlh@hang}{\noindent}{}{}{}%
    \titleformat{\section}[block]%
        {\normalsize\bfseries\boldmath}%
        {\appendixname~\thesection:}%
        {0.333em}{}%
\newcommand*{\ExtractCoordinate}[1]{\path (#1); \pgfgetlastxy{\XCoord}{\YCoord}; }%
\newcommand*{\ExtractImgDims}[1]{
    \ExtractCoordinate{$(#1.south west)$};
    \pgfmathsetmacro{\imgx}{\XCoord}
    \pgfmathsetmacro{\imgy}{\YCoord}
    \ExtractCoordinate{$(#1.north east)$};
    \pgfmathsetmacro{\imgw}{\XCoord - \imgx}
    \pgfmathsetmacro{\imgh}{\YCoord - \imgy}
}
\newcommand*{\RelativeSpy}[4]{
    \ExtractImgDims{#2};
    \begin{scope}[x=\imgw,y=\imgh,xshift=\imgx,yshift=\imgy]
        \coordinate (spyroi-#1) at #3;
        \coordinate (spypos-#1) at #4;
        \spy[anchor=center] on (spyroi-#1) in node[anchor=center] at (spypos-#1);
    \end{scope}
}
\newcommand{\R}{\mathbb{R}}
\newcommand{\IN}{\mathbb{N}}
\newcommand{\IP}{\mathcal{P}}
\newcommand{\IM}{\mathcal{M}}
\newcommand{\IL}{\mathcal{L}}
\newcommand{\IH}{\mathcal{H}}
\newcommand{\IS}{\mathbb{S}}
\newcommand{\nbhd}{\mathcal{N}}
\DeclareMathOperator*{\argmin}{arg\,min}
\newcommand{\esssup}{\operatorname{ess\,sup}}
\newcommand{\supp}{\operatorname{supp}}
\newcommand{\Lip}{\operatorname{Lip}}
\newcommand{\Per}{\operatorname{Per}}
\newcommand{\TV}{\operatorname{TV}}
\newcommand{\SNR}{\operatorname{SNR}}
\newcommand{\KR}{K\!R}
\renewcommand{\div}{\operatorname{div}}
\newcommand{\dd}{\,d}
\newcommand{\proj}{\pi}
\providecommand{\journalname}[1]{}
\providecommand{\keywords}[1]{\par{\bf Key words:} #1}
\providecommand{\institute}[1]{\date{\normalsize \textit{#1}}}
\journalname{J Math Imaging Vis}
\begin{document}

\title{Measure-Valued Variational Models with Applications %
       to Diffusion-Weighted Imaging}

\author{Thomas Vogt \and Jan Lellmann}

\institute{%
Institute of Mathematics and Image Computing (MIC), University of Lübeck, \\
Maria-Goeppert-Str. 3, 23562 Lübeck, Germany \\
\{vogt,lellmann\}@mic.uni-luebeck.de
}

\maketitle

\begin{abstract}
We develop a general mathematical framework for variational problems where the
unknown function takes values in the space of probability measures on some
metric space.
We study weak and strong topologies and define a total variation seminorm
for functions taking values in a Banach space.
The seminorm penalizes jumps and is rotationally invariant under certain
conditions.
We prove existence of a minimizer for a class of variational problems based on
this formulation of total variation, and provide an example where uniqueness
fails to hold.
Employing the Kan\-torovich-Rubinstein transport norm from the theory of optimal
transport, we propose a variational approach for the restoration of orientation
distribution function (ODF)-valued images, as commonly used in Diffusion MRI.
We demonstrate that the approach is numerically feasible on several data sets.
\keywords{%
    Variational methods, %
    Total variation, %
    Measure theory, %
    Optimal transport, %
    Diffusion MRI, %
    Manifold-Valued Imaging %
}
\end{abstract}

\noindent
\section{Introduction}
In this work, we are concerned with variational problems in which the unknown
function $u\colon \Omega \to \IP(\IS^2)$ maps from an open and bounded set
$\Omega\subseteq\R^3$, the \emph{image domain}, into the set of Borel
\emph{probability measures} $\IP(\IS^2)$ on the two-dimensional unit sphere
$\IS^2$ (or, more generally, on some metric space):
each value $u_x := u(x) \in \IP(\IS^2)$ is a Borel probability measure on
$\IS^2$, and can be viewed as a distribution of directions in
$\R^3$.

Such measures $\mu \in \IP(\IS^2)$, in particular when represented using
density functions, are known as \emph{orientation distribution functions}
(ODFs).
We will keep to the term due to its popularity, although we will be mostly
concerned with \emph{measures} instead of functions on $\IS^2$.
Accordingly, an \emph{ODF-valued image} is a function $u\colon\Omega\to\IP(\IS^2)$.
ODF-valued images appear in reconstruction schemes for diffusion-weighted
magnetic resonance imaging (MRI), such as Q-ball imaging (QBI) {\cite{tuch2004}}
and constrained spherical deconvolution (CSD) {\cite{tournier2004}.

\begin{figure*}
    \begin{tikzpicture}
        \node[inner sep=0] (fibers) at (0,0) {
            \includegraphics[%
                trim=19 8 718 8,clip,width=0.194\textwidth
            ]{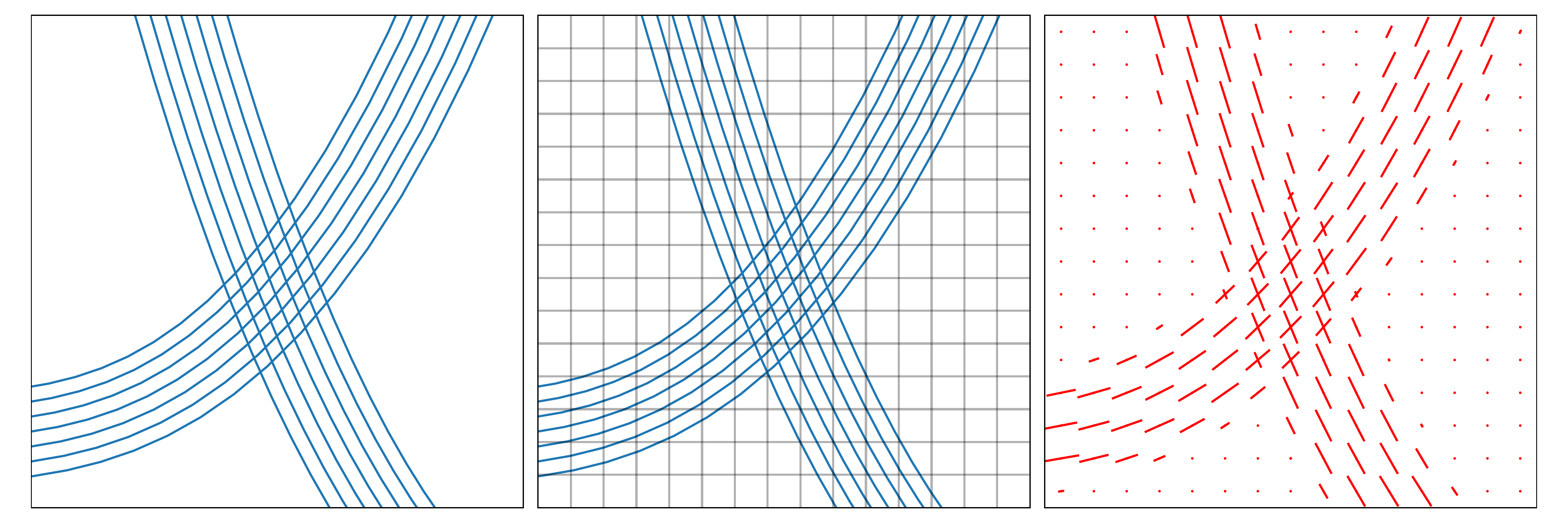}
        };
        \node[inner sep=0,below=1mm of fibers] {
            \includegraphics[%
                trim=718 8 19 8,clip,width=0.194\textwidth
            ]{fig/plot-phantom}
        };
    \end{tikzpicture}
    \includegraphics[%
        trim=220 220 220 220,clip,width=0.396\textwidth
    ]{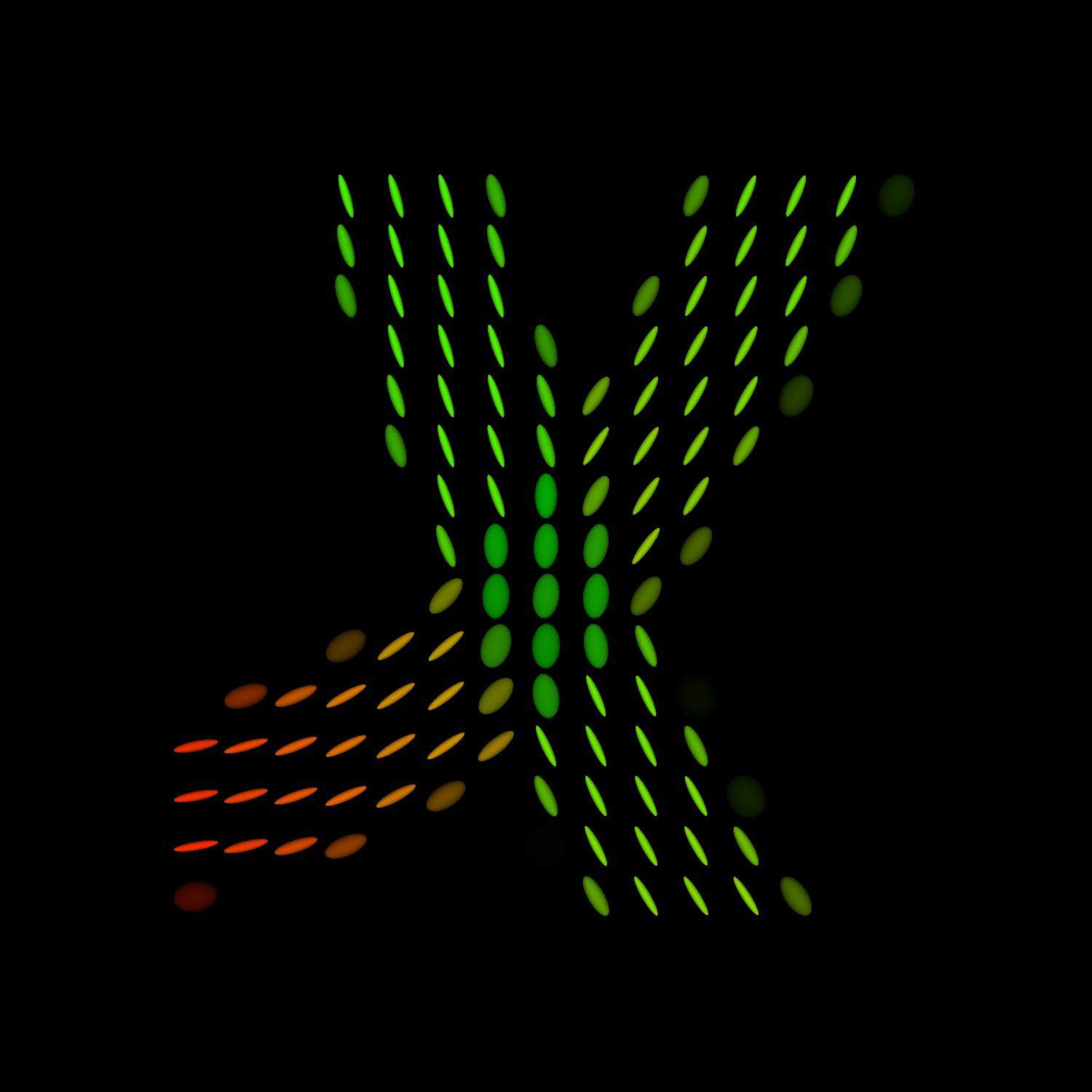}
    \includegraphics[%
        trim=0 0 90 90,clip,width=0.396\textwidth
    ]{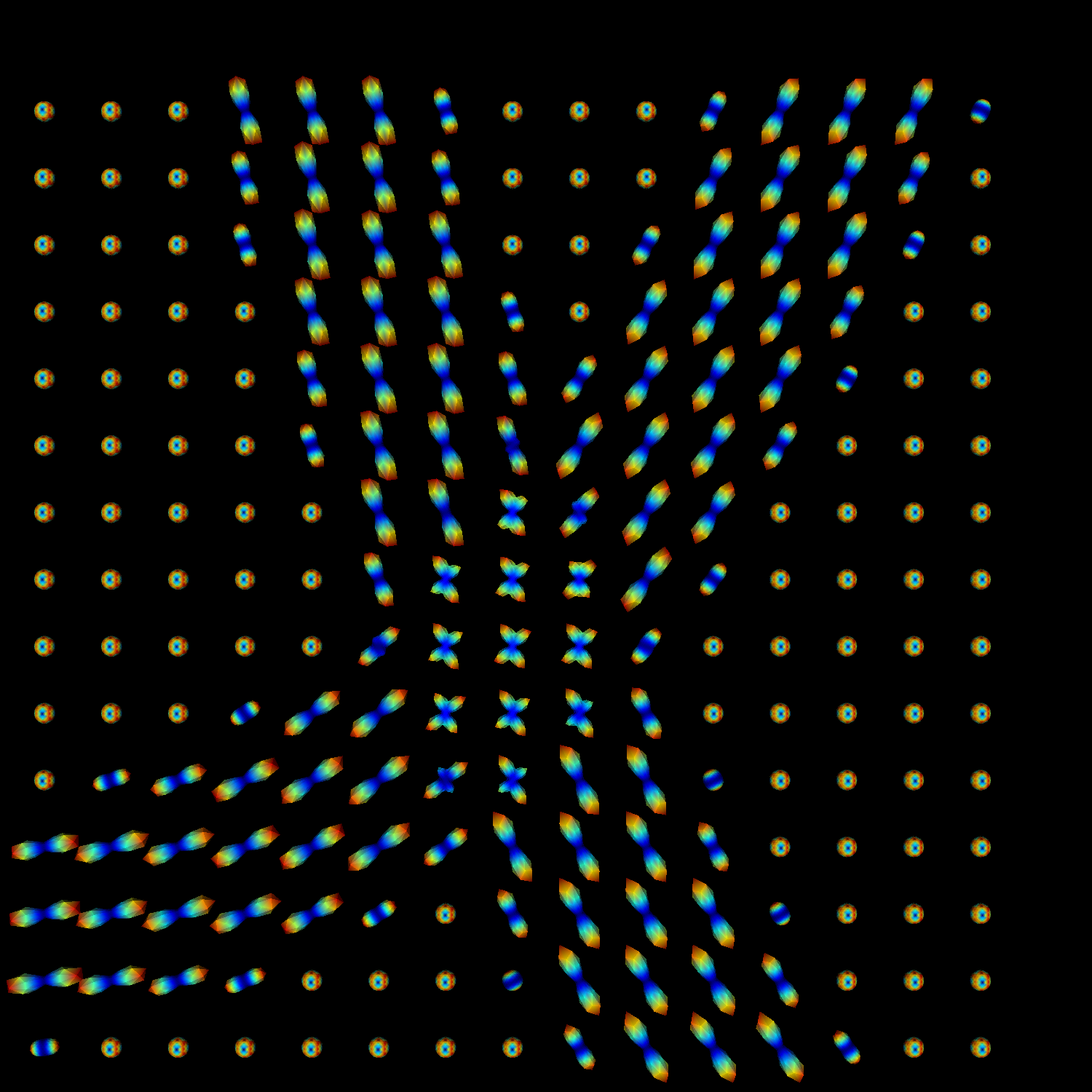}
    \caption{%
        \emph{Top left:} 2-D fiber phantom as described in
            Sect.~\ref{sec:fiber-phantom}.
        \emph{Bottom left:} Peak directions on a $15 \times 15$ grid, derived
            from the phantom and used for the generation of synthetic HARDI data.
         \emph{Center:} The diffusion tensor (DTI) reconstruction approximates
            diffusion directions in a parametric way using tensors, visualized
            as ellipsoids.
        \emph{Right:} The QBI-CSA ODF reconstruction represents fiber
            orientation using probability measures at each point, which allows
            to accurately recover fiber crossings in the center region.
    }\label{fig:phantom-plot}
\end{figure*}
\paragraph{Applications in Diffusion MRI.}
In diffusion-weighted (DW) magnetic resonance imaging (MRI), the diffusivity of
water in biological tissues is measured non-invasively.
In medical applications where tissues exhibit fibrous microstructures, such as
muscle fibers or axons in cerebral white matter, the diffusivity contains
valuable information about the fiber architecture.
For DW measurements, six or more full 3D MRI volumes are acquired with varying
magnetic field gradients that are able to sense diffusion.

Under the assumption of anisotropic Gaussian diffusion, positive definite
matrices (tensors) can be used to describe the diffusion in each voxel.
This model, known as \emph{diffusion tensor imaging} (DTI)~\cite{bass1994},
requires few measurements while giving a good estimate of the main diffusion
direction in the case of well-aligned fiber directions.
However, crossing and branching of fibers at a scale smaller than the voxel
size, also called intra-voxel orientational heterogeneity (IVOH), often occurs
in human cerebral white matter due to the relatively large (millimeter-scale)
voxel size of DW-MRI data.
Therefore, DTI data is insufficient for accurate fiber tract mapping in regions
with complex fiber crossings (Fig.~\ref{fig:phantom-plot}).

More refined approaches are based on \emph{high angular resolution diffusion
imaging} (HARDI)~\cite{tuch2002} measurements that allow for more accurate
restoration of IVOH by increasing the number of applied magnetic field gradients.
Reconstruction schemes for HARDI data yield orientation distribution
functions (ODFs) instead of tensors.
In \emph{Q-ball imaging} (QBI)~\cite{tuch2004}, an ODF is interpreted to be the
marginal probability of diffusion in a given direction \cite{aganj2009}.
In contrast, ODFs in \emph{constrained spherical deconvolution}
(CSD) approaches~\cite{tournier2004}, also denoted \emph{fiber ODFs}, estimate
the density of fibers per direction for each voxel of the volume.

In all of these approaches, ODFs are modelled as antipodally symmetric functions
on the sphere which could be modelled just as well on the projective space (which
is defined to be a sphere where antipodal points are identified).
However, most approaches parametrize ODFs using symmetric spherical harmonics
basis functions which avoids any numerical overhead. 
Moreover, novel approaches~\cite{delp2007,ehric2011,reis2012,CetinKarayumak2018}
allow for asymmetric ODFs to account for intravoxel geometry.
Therefore, we stick to modelling ODFs on a sphere even though our model could
be easily adapted to models on the projective space.
\paragraph{Variational models for orientation distributions.}
As a common denominator, in the above applications, reconstructing orientation
distributions rather than a single orientation at each point allows to recover
directional information of structures -- such as vessels or nerve fibers -- that
may overlap or have crossings:
For a given set of directions $A \subset \IS^2$, the integral $\int_A d u_x(z)$
describes the fraction of fibers crossing the point $x\in\Omega$ that are
oriented in any of the given directions $v\in A$.

However, modeling ODFs as probability measures in a non-parametric way is
surprisingly difficult.
In an earlier conference publication \cite{Vogt2017}, we proposed a new
formulation of the classical total variation seminorm
(TV)~\cite{ambrosio2000,chambolle2010introduction} for nonparametric Q-ball
imaging that allows to formulate the variational restoration model
\begin{equation}\label{eq:variational-problem}
    \inf_{u:\Omega \to \IP(\IS^2)}
        \int_\Omega \rho(x,u_x) \dd x + \lambda\TV_{W_1}(u),
\end{equation}
with various pointwise data fidelity terms
\begin{equation}
    \rho\colon \Omega \times \IP(\IS^2) \to [0,\infty).
\end{equation}
This involved in particular a non-parametric concept of total variation for
ODF-valued functions that is mathematically robust and computationally
feasible:
The idea is to build upon the $\TV$-formulations developed in the context of
functional lifting \cite{lell2013}
\begin{equation}
    \begin{aligned}
    &\TV_{W_1}(u) := \sup\left\{
        \int_\Omega
            \langle -\div p(x,\cdot), u_x \rangle
        \dd x :~\right.\\
        &\left.\phantom{\TV_{W_1}(u) := \sup\{\int_\Omega \langle}
        ~p \in C^1_c(\Omega \times \IS^2; \R^3),
        ~p(x,\cdot) \in \Lip_1(\IS^2; \R^3)
    \right\},
    \end{aligned}\label{eq:liftedTV}
\end{equation}
where $\langle g, \mu \rangle := \int_{\IS^2} g(z)\dd \mu(z)$ whenever $\mu$ is
a measure on $\IS^2$ and $g$ is a real- or vector-valued function on $\IS^2$.

One distinguishing feature of this approach is that it is applicable
to arbitrary Borel probability measures.
In contrast, existing mathematical frameworks for QBI and CSD generally follow
the standard literature on the physics of MRI \cite[p. 330]{Callaghan91} in
assuming ODFs to be given by a \emph{probability density function} in
$L^1(\IS^2)$, often with an explicit parametrization.

As an example of one such approach, we point to the fiber continuity regularizer
proposed in \cite{reisert2013} which is defined for ODF-valued functions $u$
where, for each $x \in \Omega$, the measure $u_x$ can be represented by a 
probability density function $z \mapsto u_x(z)$ on $\IS^2$:
\begin{equation}
    R_{\mathrm{FC}}(u) := \int_\Omega \int_{\IS^2}
        (z \cdot \nabla_x u_x(z))^2
    \dd z \dd x
\end{equation}
Clearly, a rigorous generalization of this functional to measure-valued
functions for arbitrary Borel probability measures is not straightforward.

\begin{figure*}%
    \includegraphics[%
        trim=5 246 12 11,clip,width=\textwidth
    ]{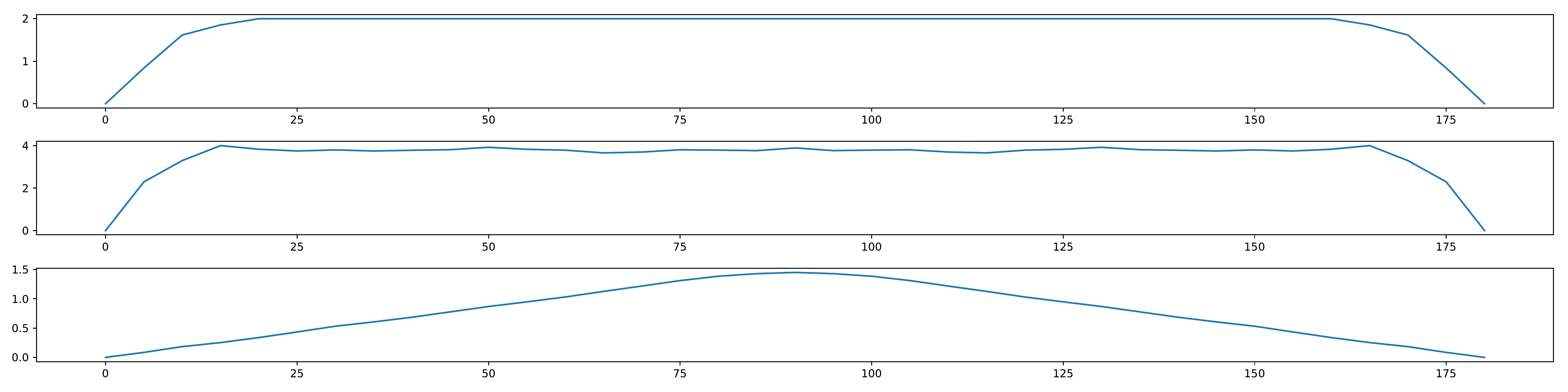}

    \vspace{2mm}
    \includegraphics[%
        trim=5 12 12 242,clip,width=\textwidth
    ]{fig/lp-w1-result}

    \vspace{2mm}
    \includegraphics[%
        trim=502 475 1465 475,clip,width=0.032\textwidth
    ]{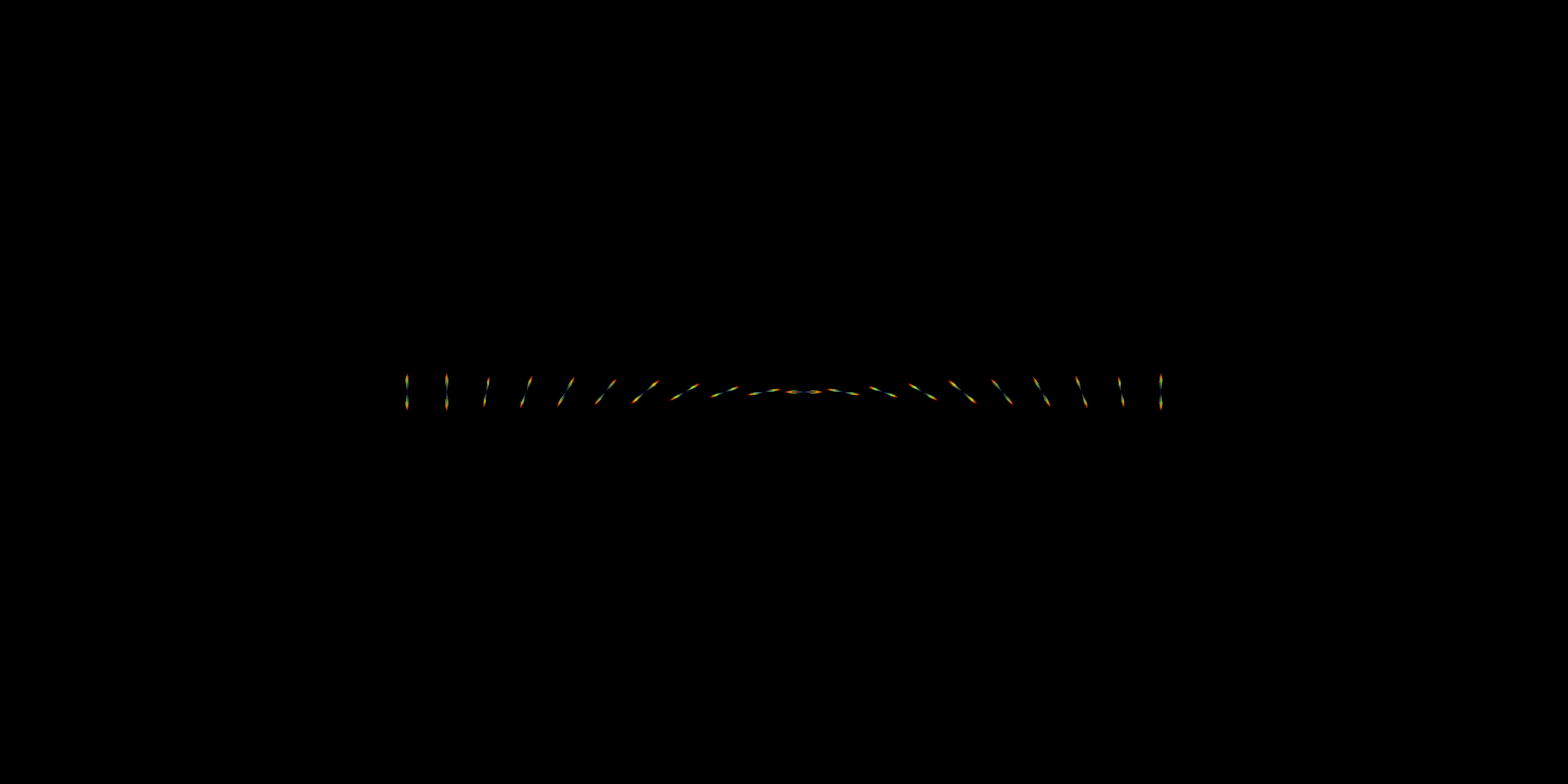}\hfill
    \includegraphics[%
        trim=555 475 505 475,clip,width=0.917\textwidth
    ]{fig/lp-w1-data}\hspace{0.031\textwidth}
    \caption{%
        \emph{Horizontal axis:} Angle of main diffusion direction relative
        to the reference diffusion profile in the bottom left corner.
        \emph{Vertical axis:} Distances of the ODFs in the bottom row to
        the reference ODF in the bottom left corner ($L^1$-distances in the
        top row and $W^1$-distance in the second row).
        $L^1$-distances do not reflect the linear change in direction, whereas
        the $W^1$-distance exhibits an almost-linear profile.
        $L^p$-distances for other values of $p$ (such as $p=2$) show a behavior
        similar to $L^1$-distances.
    }\label{fig:lpw1-plot}
\end{figure*}

While practical, the probability density-based approach raises some modeling
questions, which lead to deeper mathematical issues.
In particular, comparing probability densities using the popular
$L^p$-norm-based data fidelity terms -- in particular the squared $L^2$-norm
-- does
not incorporate the structure naturally carried by probability densities such
as nonnegativity and unit total mass, and ignores metric information about
$\IS^2$.

To illustrate the last point, assume that two probability measures are given in
terms of density functions $f,g \in L^p(\IS^2)$ satisfying
$\supp(f) \cap \supp(g) = \emptyset$, i.e., having disjoint support on $\IS^2$.
Then $\|f - g\|_{L^p} = \|f\|_{L^p} + \|g\|_{L^p}$, irrespective of the size and
relative position of the supporting sets of $f$ and $g$ on $\IS^2$.

One would prefer to use statistical metrics such as optimal transport metrics
\cite{villani2009} that properly take into account distances on the underlying
set $\IS^2$ (Fig.~\ref{fig:lpw1-plot}).
However, replacing the $L^p$-norm with such a metric in density-based
variational imaging formulations will generally lead to ill-posed minimization
problems, as the minimum might not be attained in $L^p(\IS^2)$, but possibly in
$\IP(\IS^2)$ instead.

Therefore, it is interesting to investigate whether one can derive a mathematical basis for variational
image processing with ODF-valued functions without making assumptions about the
para\-metrization of ODFs nor assuming ODFs to be given by density functions.
\subsection{Contribution}
Building on the preliminary results published in the conference publication
\cite{Vogt2017}, we derive a rigorous mathematical framework
(Sect.~\ref{sec:theory} and Appendices) for a generalization of the total
variation seminorm formulated in \eqref{eq:liftedTV} to Banach space-valued%
\footnote{%
    Here and throughout the paper, we use ``Banach space-valued'' as a synonym
    for ``taking values in a Banach space'' even though we acknowledge the
    ambiguity carried by this expression.
    Similarly, ``metric space-valued'' is used in \cite{Ambrosio1990} and
    ``manifold-valued'' in \cite{BBSW16}.
}
and, as a special case, ODF-valued functions (Sect.~\ref{sec:rigorous-tv}).

Building on this framework, we show existence of minimizers to
\eqref{eq:variational-problem} (Thm.~\ref{thm:existence}) and discuss properties
of $\TV$ such as rotational invariance (Prop.~\ref{prop:rot-invariance}) and the
behavior on cartoon-like jump functions (Prop.~\ref{prop:cartoon-tv}).

We demonstrate that our framework can be numerically implemented
(Sect.~\ref{sec:numerics}) as a primal-dual saddle-point problem involving only
convex functions.
Applications to synthetic and real-world data sets show significant reduction of
noise as well as qualitatively convincing results when combined with existing
ODF-based imaging approaches, including Q-ball and CSD
(Sect.~\ref{sec:results}).

Details about the functional-analytic and measure-theoretic background of our
theory are given in Appendix~\ref{apdx:background}.
There, well-definedness of the $\TV$-seminorm and of variational problems of the
form \eqref{eq:variational-problem} is established by carefully considering
measurability of the functions involved (Lemmas \ref{lem:measurability1} and
\ref{lem:rho-measurable}).
Furthermore, a func\-tio\-nal-analytic explanation for the dual structure that
is inherent in \eqref{eq:liftedTV} is given.
\subsection{Related Models}
The high angular resolution of HARDI results in a large amount of noise compared
with DTI. Moreover, most QBI and CSD models reconstruct the ODFs in each
voxel separately.
Consequently, HARDI data is a particularly interesting target for post-processing
in terms of denoising and regularization in the sense of contextual processing.
Some techniques apply a total variation or diffusive regularization to the HARDI
signal before ODF reconstruction \cite{mcgraw2009,kim2010,duits2011,becker2012}
and others regularize in a post-processing step \cite{delp2007,duits2013,weinm2016}.
\subsubsection{Variational Regularization of DW-MRI Data}
A Mumford-Shah model for edge-preserving restoration of Q-ball data
was introduced in \cite{weinm2016}.
There, jumps were penalized using the Fisher-Rao metric which depends on a
parametrization of ODFs as discrete probability distribution functions on
sampling points of the sphere.
Furthermore, the Fisher-Rao metric does not take the metric structure of $\IS^2$
into consideration and is not amenable to biological
interpretations~\cite{ncube2011}.
Our formulation avoids any parametrization-induced bias.

Recent approaches directly incorporate a regularizer into the reconstruction
scheme:
Spatial TV-based regularization for Q-ball imaging has been proposed in
\cite{ouyang2014}.
However, the TV formulation proposed therein again makes use of the underlying
parametrization of ODFs by spherical harmonics basis functions.
Similarly, DTI-based models such as the second-order model for regularizing
general manifold-valued data~\cite{BBSW16} make use of an explicit approximation
using positive semidefinite matrices, which the proposed model avoids.

The application of spatial regularization to CSD reconstruction is known to
significantly enhance the results \cite{daducci2014}.
However, total variation \cite{canales2015} and other regularizers
\cite{hohage2015} are based on a representation of ODFs by square-integrable
probability density functions instead of the mathematically more general
probability measures that we base our method on.
\subsubsection{Regularization of DW-MRI by Linear Diffusion}
In another approach, the orientational part of ODF-valued images is
included in the image domain, so that images are identified with functions
$U\colon \R^3 \times \IS^2 \to \R$ that allow for contextual processing via
PDE-based models on the space of positions and orientation or, more precisely,
on the group $SE(3)$ of 3D rigid motions.
This technique comes from the theory of stochastic processes on the coupled space
$\R^3 \times \IS^2$.
In this context, it has been applied to the problems of contour completion
\cite{siddiqi2013} and contour enhancement \cite{duits2011,duits2013}.
Its practical relevance in clinical applications has been demonstrated 
\cite{prckovska2015}.

This approach has been used to enhance the quality of CSD as a prior in a
variational formulation \cite{reisert2013} or in a post-processing step
\cite{portegies2015} that also includes additional angular regularization.
Due to the linearity of the underlying linear PDE, convolution-based explicit solution
formulas are available \cite{duits2011,portegies2017}. Implemented efficiently \cite{meesters2016a,meesters2016b}, they outperform our more computationally demanding model, which is not tied to the specific application of DW-MRI, but allows arbitrary metric spaces.
Furthermore, nonlinear Perona and Malik extensions to this technique have been
studied \cite{creusen2013} that do not allow for explicit solutions.

As an important distinction, in these approaches, spatial location and orientation are coupled in the
regularization.
Since our model starts from the more general setting of measure-valued functions
on an arbitrary metric space (instead of only $\IS^2$), it does not currently realize an
equivalent coupling. An extension to anisotropic total variation for measure-valued functions might close this gap in the future.

In contrast to these diffusion-based methods, our approach is able to preserve
edges by design, even though the coupling of positions and orientations is
able to make up for this shortcoming at least in part since edges in DW-MRI are,
most of the time, oriented in parallel to the direction of diffusion.
Furthermore, the diffusion-based methods are formulated for square-integrable
density functions, excluding point masses. Our method avoids this limitation by operating on
mathematically more general probability measures.
\subsubsection{Other Related Theoretical Work}
Variants of the Kantorovich-Rubinstein formulation of the Wasserstein
distance that appears in our framework have been applied in \cite{lell2014} and,
more recently, in \cite{fitschen2016,fitschen2017} to the problems of real-,
RGB- and manifold-valued image denoising.

Total variation regularization for functions on the space of positions and
orientations was recently introduced in \cite{chambolle2017} based on
\cite{mirebeau2017}.
Similarly, the work and toolbox in \cite{skibbe2017} is concerned with
the implementation of so-called \emph{orientation fields} in 3D image processing.

A Dirichlet energy for measure-valued functions based on Wasserstein metrics 
was recently developed in the context of harmonic mappings in \cite{lavenant2017}
which can be interpreted as a diffusive ($L^2$) version of our proposed 
($L^1$) regularizer.

Our work is based on the conference publication~\cite{Vogt2017}, where a
non-parametric Wasserstein-total variation regularizer for Q-ball data is
proposed.
We embed this formulation of TV into a significantly more general definition of
TV for Banach space-valued functions.

In the literature, Banach space-valued functions of bounded variation mostly
appear as a special case of metric space-valued functions of bounded variation
(BV) as introduced in \cite{Ambrosio1990}.
Apart from that, the case of one-dimensional domains attracts some attention
\cite{duchon2011} and the case of Banach space-valued BV-functions defined on
a metric space is studied in \cite{miranda2002}.

In contrast to these approaches, we give a definition of Banach space-valued BV
functions that live on a finite-dimensional domain.
In analogy with the real-valued case, we formulate the TV seminorm by
duality, inspired by the functional-analytic framework from the theory of
functional lifting~\cite{IonescuTulcea1969} as used in the theory
of Young-measures \cite{Ball1989}.

Due to the functional-analytic approach, our model does not depend on the
specific parametrization of the ODFs and can be combined with the QBI and CSD
frameworks for ODF reconstruction from HARDI data, either in a post-processing
step or during reconstruction.
Combined with suitable data fidelity terms such as least-squares or Wasserstein
distances, it allows for an efficient implementation using state-of-the-art
primal-dual methods.
\section{A Mathematical Framework for
         Measure-Valued Functions}\label{sec:theory}
Our work is motivated by the study of ODF-valued functions
$u\colon \Omega \to \IP(\IS^2)$ for $\Omega \subset \R^3$ open and bounded.
However, from an abstract viewpoint, the unit sphere $\IS^2 \subset \R^3$
equipped with the metric induced by the Riemannian manifold structure
\cite{lee1997} -- i.e., the distance between two points is the arc length of
the great circle segment through the two points -- is simply a particular
example of a compact metric space.

As it turns out, most of the analysis only relies on this property.
Therefore, in the following we generalize the setting of ODF-valued functions
to the study of functions taking values in the space of Borel probability
measures on an \emph{arbitrary} compact metric space (instead of $\IS^2$).

More precisely, throughout this section, let
\begin{enumerate}
\item $\Omega \subset \R^d$ be an open and bounded set, and let
\item $(X,d)$ be a compact metric space, e.g.,
    a compact Riemannian manifold equipped with the commonly-used metric
    induced by the geodesic distance (such as $X = \IS^2$).
\end{enumerate}
Boundedness of $\Omega$ and compactness of $X$ are not required by all of the
statements below.
However, as we are ultimately interested in the case of $X = \IS^2$ and
rectangular image domains, we impose these restrictions.
Apart from DW-MRI, one natural application of this generalized setting are
two-dimensional ODFs where $d = 2$ and $X = \IS^1$ which is similar
to the setting introduced in \cite{chambolle2017} for the edge enhancement of
color or grayscale images.

The goal of this section is a mathematically well-defined formulation of $\TV$
as given in \eqref{eq:liftedTV} that exhibits all the properties that the
classical total variation seminorm is known for:
anisotropy (Prop. \ref{prop:rot-invariance}), preservation of edges and
compatibility with piecewise-constant signals (Prop. \ref{prop:cartoon-tv}).
Furthermore, for variational problems as in \eqref{eq:variational-problem}, we
give criteria for the existence of minimizers (Theorem \ref{thm:existence}) and
discuss (non-)uniqueness (Prop. \ref{prop:non-uniqueness}).

A well-defined formulation of $\TV$ as given in \eqref{eq:liftedTV} requires a
careful inspection of topological and functional analytic concepts from optimal
transport and general measure theory.
For details, we refer the reader to the elaborate Appendix~\ref{apdx:background}.
Here, we only introduce the definitions and notation needed for the statement
of the central results.
\subsection{Definition of $\TV$}\label{sec:rigorous-tv}
We first give a definition of $\TV$ for Banach space-valued functions (i.e.,
functions that take values in a Banach space), which a definition of $\TV$
for measure-valued functions will turn out to be a special case of.

For weakly measurable (see Appendix~\ref{apdx:bv-bv}) functions
$u\colon \Omega \to V$ with values in a Banach space $V$ (later, we will replace
$V$ by a space of measures), we define, extending the formulation of
$\TV_{W_1}$ introduced in \cite{Vogt2017},
\begin{equation}\label{eq:bv-valued-tv}
    \begin{aligned}
    &\TV_{V}(u) := \sup\left\{
        \int_\Omega
            \langle -\div p(x), u(x) \rangle
        \dd x :~\right.\\
        &\left.\phantom{\TV_{V}(u) := \sup\{\int_\Omega \langle}
        ~p \in C_c^1(\Omega, (V^*)^d),
        ~\forall x \in \Omega\colon \|p(x)\|_{(V^*)^d} \leq 1
    \right\}.
    \end{aligned}
\end{equation}
By $V^*$, we denote the (topological) dual space of $V$, i.e., $V^*$ is the set
of bounded linear operators from $V$ to $\R$.
The criterion $p \in C_c^1(\Omega, (V^*)^d)$ means that $p$ is a compactly
supported function on $\Omega \subset \R^d$ with values in the Banach space
$(V^*)^d$ and the directional derivatives $\partial_i p\colon \Omega \to (V^*)^d$,
$1 \leq i \leq d$, (in Euclidean coordinates) lie in $C_c(\Omega, (V^*)^d)$.
We write
\begin{equation}
    \div p(x) := \sum_{i=1}^d \partial_i p_i(x).
\end{equation}
Lemma~\ref{lem:measurability1} ensures that the integrals in
\eqref{eq:bv-valued-tv} are well-defined and Appendix~\ref{apdx:product-norms}
discusses the choice of the product norm $\|\cdot\|_{(V^*)^d}$.
\paragraph{Measure-valued functions.}
Now we want to apply this definition to measure-valued functions 
$u\colon \Omega \to \IP(X)$, where $\IP(X)$ is the set of Borel probability
measures supported on $X$.

The space $\IP(X)$ equipped with the Wasserstein metric $W_1$ from the theory of
optimal transport is isometrically embedded into
the Banach space $V = \KR(X)$ (the \emph{Kantorovich-Rubinstein space}) whose
dual space is the space $V^* = \Lip_0(X)$ of Lipschitz-continuous functions on
$X$ that vanish at an (arbitrary but fixed) point $x_0 \in X$.
This setting is introduced in detail in Appendix~\ref{apdx:kr}.
Then, for $u\colon \Omega \to \IP(X)$, definition \eqref{eq:bv-valued-tv}
comes back to \eqref{eq:liftedTV} or, more precisely,
\begin{equation}\label{eq:measure-valued-tv}
    \begin{aligned}
    &\TV_{\KR}(u) := \sup\left\{
        \int_\Omega
            \langle -\div p(x), u(x) \rangle
        \dd x :~\right.\\
        &\left.\phantom{\TV_{\KR}(u) := \sup\{\int_\Omega\langle}\!\!
        p \in C_c^1(\Omega, [\Lip_0(X)]^d), ~\|p(x)\|_{[\Lip_0(X)]^d} \leq 1
    \right\},
    \end{aligned}
\end{equation}
where the definition of the product norm $\|\cdot\|_{[\Lip_0(X)]^d}$ is
discussed in Appendix~\ref{apdx:lip-product-norm}.
\subsection{Properties of $\TV$}
In this section, we show that the properties that the classical total variation
seminorm is known for continue to hold for definition \eqref{eq:bv-valued-tv}
in the case of Banach space-valued functions.
\paragraph{Cartoon functions.}
A reasonable demand is that the new formulation should behave similarly to the
classical total variation on cartoon-like jump functions $u\colon \Omega \to V$,
\begin{equation}\label{eq:jump-u}
    u(x) := \begin{cases}
        u^+, & x\in U, \\
        u^-, & x \in \Omega \setminus U, \\
    \end{cases}
\end{equation}
for some fixed measurable set $U \subset \Omega$ with smooth boundary
$\partial U$, and $u^+, u^- \in V$.
The classical total variation assigns to such functions  a penalty of
\begin{equation}
    \IH^{d-1}(\partial U)\cdot \|u^+ - u^-\|_V,
\end{equation}
where the Hausdorff measure $\IH^{d-1}(\partial U)$ describes the length or
area of the jump set.
The following proposition, which generalizes \cite[Prop.~1]{Vogt2017},
provides conditions on the norm $\|\cdot\|_{(V^*)^d}$ which guarantee this
behavior.
\begin{proposition}\label{prop:cartoon-tv}
    Assume that $U$ is compactly contained in $\Omega$ with $C^1$-boundary
    $\partial U$.
    Let $u^+, u^- \in V$ and let $u\colon \Omega \to V$ be defined
    as in \eqref{eq:jump-u}.
    If the norm $\|\cdot\|_{(V^*)^d}$ in \eqref{eq:bv-valued-tv} satisfies
    \begin{align}
        \label{eq:product-norm-lower-bound}
        &\left|\textstyle{\sum_{i=1}^d x_i} \langle p_i, v \rangle\right|
            \leq \|x\|_2 \|p\|_{(V^*)^d} \|v\|_V, \\
        \label{eq:product-norm-upper-bound}
        &\|(x_1 q, \dots, x_d q)\|_{(V^*)^d} \leq \|x\|_2 \|q\|_{V^*}
    \end{align}
    whenever $q \in V^*$, $p \in (V^*)^d$, $v \in V$, and $x \in \R^d$, then
    \begin{equation}\label{eq:tv-jump-penalty}
        \TV_{V}(u) = \IH^{d-1}(\partial U) \cdot \|u^+ - u^-\|_V.
    \end{equation}
\end{proposition}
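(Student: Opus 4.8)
The plan is to evaluate the supremum in \eqref{eq:bv-valued-tv} directly and establish the two inequalities ``$\leq$'' and ``$\geq$'' separately. The common first step for both is to reduce the volume integral to a boundary integral. Since $u$ takes only the values $u^+$ on $U$ and $u^-$ on $\Omega\setminus U$, and since for fixed $v\in V$ the scalar vector field $g_v(x):=\langle p(x),v\rangle\in\R^d$ is genuinely $C^1$ (because $\partial_i\langle p_j(x),v\rangle=\langle \partial_i p_j(x),v\rangle$ is continuous by $p\in C_c^1(\Omega,(V^*)^d)$), I would split $\int_\Omega\langle-\div p,u\rangle\dd x$ over $U$ and $\Omega\setminus U$ and apply the classical Gauss--Green theorem on the $C^1$ domain $U$. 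Writing $w:=u^+-u^-$ and letting $\nu$ denote the outward unit normal of $U$, the compact support of $p$ annihilates any contribution from $\partial\Omega$, and the two boundary terms combine into the identity
\[
    \int_\Omega \langle -\div p(x), u(x)\rangle \dd x = -\int_{\partial U} \sum_{i=1}^d \nu_i(x)\,\langle p_i(x), w\rangle \dd\IH^{d-1}(x),
\]
valid for every admissible $p$. This identity is the backbone of both estimates.

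For the upper bound I would apply hypothesis \eqref{eq:product-norm-lower-bound} pointwise on $\partial U$ with $x=\nu(x)$ and $v=w$. Since $\|\nu(x)\|_2=1$ and $\|p(x)\|_{(V^*)^d}\leq 1$, the integrand is bounded in modulus by $\|w\|_V$, so integrating over $\partial U$ gives $\int_\Omega\langle-\div p,u\rangle\dd x \leq \IH^{d-1}(\partial U)\,\|w\|_V$ uniformly in $p$, whence $\TV_V(u)\leq\IH^{d-1}(\partial U)\,\|u^+-u^-\|_V$.

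For the lower bound (the case $w=0$ being trivial, so assume $w\neq 0$) I would exhibit near-optimal test fields. By Hahn--Banach, fix $q\in V^*$ with $\|q\|_{V^*}=1$ and $\langle q,w\rangle=\|w\|_V$. The target is to have $p_i$ behave like $-\nu_i q$ on $\partial U$, since substituting $p_i(x)=-\nu_i(x)q$ into the identity turns the integrand into $\|w\|_V\,\|\nu(x)\|_2^2=\|w\|_V$. The crucial point is that feasibility, $\|p(x)\|_{(V^*)^d}\leq 1$, is granted exactly by hypothesis \eqref{eq:product-norm-upper-bound} applied to the vector $-\nu(x)$ of Euclidean norm one. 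I would complete the construction by choosing a cutoff $\phi\in C_c^1(\Omega)$ with $\phi\equiv1$ on $\partial U$ and $0\leq\phi\leq1$, and setting $p:=-\phi\,\tilde N\,q$ with $p_i=-\phi\,\tilde N_i\,q$ for a suitable ambient field $\tilde N$; plugging in yields the integrand $\|w\|_V\,(\nu\cdot\tilde N)$ on $\partial U$.

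The hard part will be the regularity of this construction. Because $\partial U$ is only $C^1$, the normal field $\nu$ is merely continuous, so the naive choice of $\tilde N$ as the gradient of the signed distance function produces a $p$ that is only $C^0$ rather than the required $C_c^1$. I would circumvent this by approximation: uniformly approximate the continuous field $\nu$ on the compact set $\partial U$ by a smooth ambient field $\tilde N$ with $\|\tilde N\|_2\leq1$ (renormalizing by a factor $(1+\delta)^{-1}$ to absorb the small norm excess introduced by mollification), so that $\nu\cdot\tilde N\geq 1-\delta'$ on $\partial U$. The resulting $p=-\phi\,\tilde N\,q$ is genuinely $C_c^1$ and admissible, and the identity then gives $\int_\Omega\langle-\div p,u\rangle\dd x\geq(1-\delta')\,\IH^{d-1}(\partial U)\,\|w\|_V$. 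Letting $\delta'\to0$ yields $\TV_V(u)\geq\IH^{d-1}(\partial U)\,\|u^+-u^-\|_V$, which combined with the upper bound proves \eqref{eq:tv-jump-penalty}.
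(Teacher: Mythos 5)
Your argument is correct, and the upper bound coincides with the paper's: split over $U$ and $\Omega\setminus U$, apply Gauss--Green, and use \eqref{eq:product-norm-lower-bound} with $x=\nu$ on the resulting boundary integral. Where you genuinely diverge is the lower bound. The paper also takes test fields of the rank-one form $p_i(x)=\phi_i(x)\,\tilde p$ with a fixed $\tilde p\in V^*$, $\|\tilde p\|_{V^*}\leq 1$, but it then stops: it observes that $\int_\Omega\langle-\div p,u\rangle\dd x=-\int_U\div\phi\dd x\cdot\langle\tilde p,u^+-u^-\rangle$, takes the supremum over $\phi$ to recognize the classical perimeter $\Per(U,\Omega)$, cites the known identity $\Per(U,\Omega)=\IH^{d-1}(\partial U)$ for $C^1$ boundaries from the theory of Caccioppoli sets, and finally takes the supremum over $\tilde p$ using the isometry of the canonical embedding $V\hookrightarrow V^{**}$. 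You instead reprove the geometric half by hand: you pick a norming functional $q$ via Hahn--Banach (attainment rather than a supremum --- equivalent here) and explicitly construct near-optimal fields $p=-\phi\,\tilde N\,q$ by mollifying the merely continuous normal field of the $C^1$ boundary, controlling admissibility through \eqref{eq:product-norm-upper-bound} and the renormalization $(1+\delta)^{-1}$. This is sound --- the capped/renormalized mollification does give a $C^1$ ambient field with $\|\tilde N\|_2\leq1$ and $\nu\cdot\tilde N\geq1-\delta'$ on $\partial U$ --- and it buys you a self-contained proof that does not lean on the BV literature for sets of finite perimeter; the price is exactly the approximation argument you flag as the hard part, which the paper's citation makes disappear. (Your sign in the boundary identity differs from the paper's displayed one; yours is the consistent choice, and in any case the supremum is invariant under $p\mapsto-p$.)
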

\begin{proof}
    See Appendix~\ref{apdx:cartoon-tv}. \qed
\end{proof}
\paragraph{Rotational invariance.}
Property \eqref{eq:tv-jump-penalty} is inherently rotationally invariant:
we have $\TV_V(u) = \TV_V(\tilde u)$ whenever $\tilde u(x) := u(Rx)$ for some
$R \in SO(d)$ and $u$ as in \eqref{eq:jump-u}, with the domain $\Omega$ rotated
accordingly.
The reason is that the jump size is the same everywhere along the edge $\partial U$.
More generally, we have the following proposition:
\begin{proposition}\label{prop:rot-invariance}
    Assume that $\|\cdot\|_{(V^*)^d}$ satisfies the rotational invariance
    property
    \begin{equation}\label{eq:product-norm-rot-inv}
        \|p\|_{(V^*)^d} = \|R p\|_{(V^*)^d}
        \quad\forall p \in (V^*)^d, R \in SO(d),
    \end{equation}
    where $Rp \in (V^*)^d$ is defined via
    \begin{equation}
        (Rp)_i = \sum_{j=1}^d R_{ij} p_j \in V^*.
    \end{equation}
    Then $\TV_V$ is rotationally invariant, i.e., $\TV_V(u) = \TV_V(\tilde u)$
    whenever $u \in L_w^\infty(\Omega, V)$ and $\tilde u(x) := u(Rx)$ for some
    $R \in SO(d)$.
\end{proposition}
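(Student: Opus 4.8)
The plan is to establish the two inequalities $\TV_V(\tilde u)\ge\TV_V(u)$ and $\TV_V(u)\ge\TV_V(\tilde u)$ separately and combine them. Since $u(x)=\tilde u(R^{-1}x)$ with $R^{-1}=R^T\in SO(d)$, the function $u$ is obtained from $\tilde u$ by the same type of rotation, so the second inequality follows from the first by swapping the roles of $u$ and $\tilde u$ and replacing $R$ by $R^T$. Throughout I tacitly use that $R\Omega=\Omega$ (up to a null set), which is forced by the requirement that both $u$ and $\tilde u$ belong to $L_w^\infty(\Omega,V)$; the same requirement is respected by $\tilde u$ because $\esssup_x\|\tilde u(x)\|_V=\esssup_x\|u(x)\|_V$.

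For the first inequality I would take an arbitrary competitor $p\in C_c^1(\Omega,(V^*)^d)$ with $\|p(x)\|_{(V^*)^d}\le1$ in the supremum \eqref{eq:bv-valued-tv} defining $\TV_V(u)$, and turn it into an admissible competitor for $\TV_V(\tilde u)$ with identical objective value. The natural candidate is the rotated pullback
\[
    q(x) := R^T\big(p(Rx)\big), \qquad\text{i.e.}\qquad q_i(x)=\sum_{j=1}^d R_{ji}\,p_j(Rx),
\]
which combines the spatial pullback $x\mapsto Rx$ with the value-space rotation from \eqref{eq:product-norm-rot-inv}.

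The crucial point is that the divergence transforms covariantly, i.e.\ as a rotated scalar. Applying the chain rule to $q_i(x)=\sum_j R_{ji}\,p_j(Rx)$ and using orthogonality in the form $\sum_i R_{ji}R_{ki}=(RR^T)_{jk}=\delta_{jk}$, one obtains
\[
    \div q(x) = \sum_{i=1}^d\partial_{x_i}q_i(x) = \sum_{j,k}\delta_{jk}\,(\partial_k p_j)(Rx) = (\div p)(Rx).
\]
Consequently, after the change of variables $y=Rx$ (with $|\det R|=1$ and $R\Omega=\Omega$),
\[
    \int_\Omega \langle -\div q(x),\tilde u(x)\rangle\dd x
    = \int_\Omega \langle -(\div p)(Rx),u(Rx)\rangle\dd x
    = \int_\Omega \langle -\div p(y),u(y)\rangle\dd y,
\]
which is exactly the objective associated with $p$ in $\TV_V(u)$. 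Admissibility of $q$ is then routine: $q$ is a composition of the $C_c^1$ field $p$ with the diffeomorphism $x\mapsto Rx$ and the bounded linear value-map $R^T$, hence lies in $C_c^1(\Omega,(V^*)^d)$ with $\supp q=R^T(\supp p)$ compactly contained in $R^T\Omega=\Omega$; and the pointwise constraint is precisely where the hypothesis is used, since \eqref{eq:product-norm-rot-inv} applied to $R^T\in SO(d)$ gives $\|q(x)\|_{(V^*)^d}=\|R^T p(Rx)\|_{(V^*)^d}=\|p(Rx)\|_{(V^*)^d}\le1$.

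Taking the supremum over all admissible $p$ yields $\TV_V(\tilde u)\ge\TV_V(u)$, and the reverse inequality follows by the symmetry argument above, giving equality. I expect the only genuine obstacle to be the correct bookkeeping of the two distinct rotations: the spatial rotation, which must be matched by a value-space rotation so that $\div$ transforms covariantly (this is the orthogonality computation), and the value-space rotation itself, whose norm-invariance is exactly the standing hypothesis \eqref{eq:product-norm-rot-inv}. Everything else reduces to a change of variables, with measurability and well-definedness of the integrands guaranteed by Lemma~\ref{lem:measurability1}.
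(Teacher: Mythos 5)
Your proof is correct and follows essentially the same route as the paper's: the competitor $\tilde p(y)=R^T p(Ry)$, the orthogonality computation showing $\div \tilde p(y)=(\div p)(Ry)$, the measure-preserving change of variables, and the invocation of \eqref{eq:product-norm-rot-inv} for the norm constraint all appear in Appendix~\ref{apdx:rot-invariance}. The only cosmetic differences are that the paper lets $\tilde u$ live on the rotated domain $R^T\Omega$ rather than assuming $R\Omega=\Omega$ up to a null set, and obtains equality directly from the bijectivity of $p\mapsto\tilde p$ instead of spelling out the two inequalities separately.
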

\begin{proof}[Prop.~\ref{prop:rot-invariance}]
    See Appendix~\ref{apdx:rot-invariance}. \qed
\end{proof}
\subsection{$\TV_{\KR}$ as a Regularizer in Variational Problems}
This section shows that, in the case of measure-valued functions
$u\colon \Omega \to \IP(X)$, the functional $\TV_{\KR}$ exhibits a
regularizing property, i.e., it establishes existence of minimizers.

For $\lambda \in [0,\infty)$ and $\rho\colon \Omega \times \IP(X) \to [0,\infty)$
fixed, we consider the functional
\begin{equation}\label{eq:T-definition}
    T_{\rho,\lambda}(u)
    := \int_\Omega \rho(x, u(x)) \dd x + \lambda \TV_{\KR}(u).
\end{equation}
for $u\colon \Omega \to \IP(X)$.
Lemma~\ref{lem:rho-measurable} in Appendix~\ref{apdx:existence} makes sure that
the integrals in \eqref{eq:T-definition} are well-defined.

Then, minimizers of the energy \eqref{eq:T-definition} exist in the
following sense:
\begin{theorem}\label{thm:existence}
    Let $\Omega \subset \R^d$ be open and bounded, let $(X,d)$ be a
    compact metric space and assume that $\rho$ satisfies the assumptions from
    Lemma \ref{lem:rho-measurable}.
    Then the variational problem
    \begin{equation}\label{eq:trhoinprop}
        \inf_{u \in L_w^\infty(\Omega, \IP(X))} T_{\rho,\lambda}(u)
    \end{equation}
    with the energy
    \begin{equation}
        T_{\rho,\lambda}(u)
        := \int_\Omega \rho(x, u(x)) \dd x + \lambda \TV_{\KR}(u).
    \end{equation}
    as in \eqref{eq:T-definition} admits a (not necessarily unique) solution.
\end{theorem}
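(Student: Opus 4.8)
The plan is to apply the direct method of the calculus of variations, exploiting the fact that probability measures live in a dual space. Since $(X,d)$ is a compact metric space, $C(X)$ is separable, and I would use the identification of $L_w^\infty(\Omega, \IM(X))$ (with $\IM(X)$ the finite signed Borel measures on $X$) as the dual of the separable Banach space $L^1(\Omega, C(X))$; this is the functional-analytic content I would take from Appendix~\ref{apdx:background}. The associated weak-* topology is the one in which I would carry out both the compactness and the lower-semicontinuity arguments.

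First I would fix a minimizing sequence $(u_n) \subset L_w^\infty(\Omega, \IP(X))$ for $T_{\rho,\lambda}$; the infimum is finite since $T_{\rho,\lambda} \geq 0$ and constant functions have finite energy. Because each $u_n(x)$ is a probability measure, $\|u_n(x)\|_{\IM(X)} = 1$ for almost every $x$, so $(u_n)$ is norm-bounded, and the sequential Banach-Alaoglu theorem (valid here thanks to separability of the predual) yields a subsequence with $u_n \rightharpoonup^* u$ for some $u \in L_w^\infty(\Omega, \IM(X))$. I would then check that the constraint set is closed: testing the weak-* convergence against integrands $x \mapsto \phi(x) g(\cdot) \in L^1(\Omega, C(X))$ with $\phi \in L^1(\Omega)$, $\phi \geq 0$, and $g \in C(X)$ nonnegative (respectively $g \equiv 1$) preserves nonnegativity and unit total mass in the limit, so that $u(x) \in \IP(X)$ almost everywhere and hence $u \in L_w^\infty(\Omega, \IP(X))$.

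It remains to establish weak-* lower semicontinuity of $T_{\rho,\lambda}$ along this subsequence. For the regularizer this is immediate from its dual definition \eqref{eq:measure-valued-tv}: for each admissible $p$, one has $-\div p(x) \in \Lip_0(X) \subset C(X)$ and $x \mapsto -\div p(x)$ lies in $L^1(\Omega, C(X))$ (it is continuous with compact support), so $u \mapsto \int_\Omega \langle -\div p(x), u(x)\rangle \dd x$ is precisely a weak-* pairing and thus weak-* continuous; as a pointwise supremum of such functionals, $\TV_{\KR}$ is weak-* lower semicontinuous. For the data term I would invoke the hypotheses on $\rho$ supplied by Lemma~\ref{lem:rho-measurable}, which I expect to provide convexity and lower semicontinuity of $\rho(x,\cdot)$ on $\IP(X)$ and thereby weak-* lower semicontinuity of $u \mapsto \int_\Omega \rho(x,u(x)) \dd x$ via a standard lower-semicontinuity theorem for convex integral functionals. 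Combining the two gives $T_{\rho,\lambda}(u) \leq \liminf_n T_{\rho,\lambda}(u_n) = \inf T_{\rho,\lambda}$, so $u$ is the desired minimizer.

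The main obstacle, I expect, is not the abstract scheme but the two measure-theoretic points that make it rigorous. The first is correctly identifying $L_w^\infty(\Omega, \IM(X))$ as a dual space with separable predual, so that sequential weak-* compactness of the bounded minimizing sequence is genuinely available; this is exactly the delicate duality and measurability machinery deferred to the appendix. The second is the lower semicontinuity of the data term, which couples integration over $x$ with weak-* convergence of the measure-valued argument and hinges on precisely which regularity and convexity assumptions on $\rho$ are imposed in Lemma~\ref{lem:rho-measurable}.
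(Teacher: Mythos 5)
Your proposal is correct and follows essentially the same route as the paper's proof: identify $L_{w*}^\infty(\Omega,\IM(X))$ as the dual of $L^1(\Omega,C(X))$, extract a weak-* convergent subsequence from the (automatically bounded) minimizing sequence via Banach--Alaoglu, verify $u(x)\in\IP(X)$ a.e.\ by testing against nonnegative $\phi\otimes g$ and $g\equiv 1$, and obtain lower semicontinuity of $\TV_{\KR}$ as a supremum of weak-* continuous pairings and of the data term from convexity and boundedness of $\rho$. Your explicit remark that sequential weak-* compactness relies on separability of the predual is a point the paper leaves implicit, but it does not change the argument.
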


\begin{proof}
    See Appendix~\ref{apdx:existence}. \qed
\end{proof}

Non-uniqueness of minimizers of \eqref{eq:T-definition} is clear for
pathological choices such as $\rho \equiv 0$.
However, there are non-trivial cases where uniqueness fails to hold:

\begin{proposition}\label{prop:non-uniqueness}
    Let $X = \{0,1\}$ be the metric space consisting of two discrete points of
    distance $1$ and define $\rho(x,\mu) := W_1(f(x),\mu)$ where
    \begin{equation}
        f(x) := \begin{cases}
            \delta_1, & x \in \Omega \setminus U, \\
            \delta_0, & x \in U,
        \end{cases}
    \end{equation}
    for a non-empty subset $U \subset \Omega$ with $C^1$ boundary.
    Assume the coupled norm \eqref{eq:lip-norm-Rd} on $[\Lip_0(X)]^d$ in the
    definition \eqref{eq:measure-valued-tv} of $\TV_{\KR}$.
   
    Then there is a one-to-one correspondence between feasible solutions $u$ of
    problem \eqref{eq:trhoinprop} and feasible solutions $\tilde{u}$ of the
    classical $L^1$-$\TV$ functional
    \begin{align}
        \inf_{\tilde u \in L^1(\Omega,[0,1])} \tilde{T}_{\lambda}(u),\;
        \tilde{T}_{\lambda}(u):=
            \|\mathbf{1}_U - \tilde u\|_{L^1} + \lambda \TV(\tilde u)\label{eq:l1special}
    \end{align}
    via the mapping
    \begin{equation}
    u(x) = \tilde u(x) \delta_0 + (1 - \tilde u(x)) \delta_1.
    \end{equation}
    Under this mapping $\tilde{T}_{\lambda}(\tilde{u}) = T_{\rho,\lambda}(u)$
    holds, so that the problems \eqref{eq:trhoinprop} and \eqref{eq:l1special}
    are equivalent.

    Furthermore, there exists $\lambda > 0$ for which the minimizer of
    $T_{\rho,\lambda}$ is not unique.
\end{proposition}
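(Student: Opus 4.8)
The plan is to exploit that $X = \{0,1\}$ trivializes every object in the definitions, so that all quantities can be computed by hand. First I identify $\IP(X)$ with $[0,1]$ via $\mu_t := t\delta_0 + (1-t)\delta_1 \leftrightarrow t$; the stated map sends $\tilde u$ to $u(x) = \mu_{\tilde u(x)}$. Since $\langle g, \mu_t\rangle = g(1)(1-t)$ for every $g \in \Lip_0(X)$ normalized by $g(0)=0$, weak measurability of $u$ is equivalent to measurability of $\tilde u$, and boundedness of $\Omega$ together with $\tilde u \in [0,1]$ places $\tilde u$ in $L^1(\Omega,[0,1])$. Hence the pointwise bijection $\IP(X) \leftrightarrow [0,1]$ lifts to a bijection between the feasible sets of \eqref{eq:trhoinprop} and \eqref{eq:l1special}. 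It then remains to verify $T_{\rho,\lambda}(u) = \tilde T_\lambda(\tilde u)$ term by term, after which equivalence of the two problems (and of their minimizer sets) is immediate.

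For the data term I use that on a two-point space of diameter $1$ one has $W_1(\mu_t, \mu_s) = |t - s|$, since the only admissible transport moves $|t-s|$ units of mass across distance $1$. With $f = \delta_0$ (i.e. $t_f = 1$) on $U$ and $f = \delta_1$ (i.e. $t_f = 0$) on $\Omega\setminus U$, this gives $\rho(x,u(x)) = W_1(f(x), \mu_{\tilde u(x)}) = |\mathbf{1}_U(x) - \tilde u(x)|$, so that $\int_\Omega \rho(x,u(x))\dd x = \|\mathbf{1}_U - \tilde u\|_{L^1}$.

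The core of the argument is the identity $\TV_{\KR}(u) = \TV(\tilde u)$. Fixing the base point $x_0 = 0$, I identify $\Lip_0(X)$ isometrically with $\R$ through $g \mapsto g(1)$ (the Lipschitz norm of such $g$ equals $|g(1)|$), and correspondingly $[\Lip_0(X)]^d$ with $\R^d$, writing $\bar p_i(x) := p_i(x)(1)$. The first step is to check that under this identification the coupled norm \eqref{eq:lip-norm-Rd} reduces exactly to the Euclidean norm, so that the constraint $\|p(x)\|_{[\Lip_0(X)]^d} \le 1$ in \eqref{eq:measure-valued-tv} becomes $\|\bar p(x)\|_2 \le 1$ and $C_c^1(\Omega,[\Lip_0(X)]^d)$ becomes $C_c^1(\Omega,\R^d)$. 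Next, since $\div p(x)$ is identified with $\div \bar p(x) \in \R$, the pairing becomes $\langle -\div p(x), u(x)\rangle = -\div\bar p(x)\,(1 - \tilde u(x))$; integrating over $\Omega$ and dropping the term $\int_\Omega \div\bar p\,\dd x = 0$ (which vanishes because $\bar p$ has compact support) leaves $\int_\Omega \tilde u\,\div\bar p\,\dd x$. Taking the supremum over the admissible $\bar p$ recovers precisely the classical isotropic total variation $\TV(\tilde u)$. I expect the verification of the coupled-norm reduction, together with the careful bookkeeping of the duality and the divergence, to be the main technical obstacle; everything else is a direct translation.

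Combining the two terms yields $T_{\rho,\lambda}(u) = \tilde T_\lambda(\tilde u)$, establishing the claimed equivalence. For the final non-uniqueness statement I then reduce to the well-studied $L^1$-$\TV$ functional \eqref{eq:l1special}: it suffices to exhibit a single $\lambda$ for which $\tilde T_\lambda$ has two distinct minimizers, since translating them back through the bijection produces two distinct minimizers of $T_{\rho,\lambda}$. Here I would take $U$ to be a ball, a calibrable set, for which the geometric theory of $L^1$-$\TV$ approximation of characteristic functions shows that at the critical value $\lambda = |U|/\Per(U)$ both the constant function $\tilde u \equiv 0$ and $\tilde u = \mathbf{1}_U$ are global minimizers, as they share the energy $|U| = \lambda\,\IH^{d-1}(\partial U)$ and no competitor does strictly better. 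The subtlety to be careful about is exactly this last point: equality of the energies of $0$ and $\mathbf{1}_U$ is immediate, but showing that both are genuinely global minima, rather than merely tied with each other, is where calibrability of $U$ enters.
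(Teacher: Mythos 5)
Your reduction of the equivalence part is essentially the paper's own argument: the paper likewise encodes $u(x)=\tilde u(x)\delta_0+(1-\tilde u(x))\delta_1$, observes that weak measurability of $u$ is equivalent to measurability of $\tilde u$, translates the Lipschitz constraint $[p(x)]_{\Lip(X,\R^d)}\le 1$ into the Euclidean constraint $\|p_0(x)-p_1(x)\|_2\le 1$ on the two point values, discards the compactly supported divergence term, and identifies the data term with $\|\mathbf{1}_U-\tilde u\|_{L^1}$; your normalization $p(0)=0$ versus the paper's substitution $p:=p_0-p_1$ is an immaterial difference. Where you genuinely diverge is the final non-uniqueness claim: the paper simply cites the known non-uniqueness of $L^1$-$\TV$ minimizers for binary data at a critical $\lambda$ (Chan--Esedo\=glu), whereas you make this self-contained by taking $U$ a ball and exhibiting $\tilde u\equiv 0$ and $\tilde u=\mathbf{1}_U$ as tied global minimizers at $\lambda=|U|/\Per(U)$, correctly flagging that global optimality (not just the tie) is the point where calibrability of the ball must be invoked. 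Your version buys an explicit example at the cost of proving a slightly weaker statement than the proposition literally asserts: the hypothesis fixes an arbitrary non-empty $U$ with $C^1$ boundary, so strictly one should argue non-uniqueness for that $U$ rather than for a ball of your choosing --- though the paper's own one-line citation is no more careful on this point, and the reduction to the classical $L^1$-$\TV$ problem is the substantive content either way.
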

\begin{proof}
    See Appendix~\ref{apdx:non-uniqueness}. \qed
\end{proof}
\subsection{Application to ODF-Valued Images}
For ODF-valued images, we consider the special case $X = \IS^2$ equipped with
the metric induced by the standard Riemannian manifold structure on $\IS^2$, and
$\Omega \subset \R^3$.

Let $f \in L_w^\infty(\Omega, \IP(\IS^2))$ be an ODF-valued image and denote by
$W_1$ the Wasserstein metric from the theory of optimal transport (see equation
\eqref{eq:krdual} in Appendix~\ref{apdx:kr}).
Then the function
\begin{equation}
    \rho(x,\mu) := W_1(f(x),\mu), ~x \in \Omega, ~\mu \in \IP(\IS^2),
\end{equation}
satisfies the assumptions in Lemma~\ref{lem:rho-measurable} and hence
Theorem~\ref{thm:existence} (see Appendix~\ref{apdx:existence}).

For denoising of an ODF-valued function $f$ in a postprocessing step after ODF
reconstruction, similar to~\cite{Vogt2017} we propose to solve the variational
minimization problem
\begin{equation}\label{eq:W1TV-functional}
    \inf_{u:\Omega \to \IP(\IS^2)}
        \int_\Omega W_1(f(x),u(x)) \dd x + \lambda \TV_{\KR}(u)
\end{equation}
using the definition of $\TV_{\KR}(u)$ in \eqref{eq:measure-valued-tv}.

The following statement shows that this in fact penalizes jumps in $u$ by the
Wasserstein distance as desired, correctly taking the metric structure of
$\IS^2$ into account.
\begin{corollary}
    Assume that $U$ is compactly contained in $\Omega$ with $C^1$-boundary
    $\partial U$.
    Let the function $u\colon \Omega \to \IP(\IS^2)$ be defined
    as in \eqref{eq:jump-u} for some $u^+, u^- \in \IP(\IS^2)$.
    Choosing the norm \eqref{eq:lip-norm-Rd} (or \eqref{eq:product-norm}
    with $s=2$) on the product space $\Lip(\IS^2)^d$, we have
    \begin{equation}
        \TV_{\KR}(u) = \IH^{d-1}(\partial U) \cdot W_1(u^+, u^-).
    \end{equation}
\end{corollary}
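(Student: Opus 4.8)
The plan is to recognize the corollary as the specialization of Proposition~\ref{prop:cartoon-tv} to the Banach space $V = \KR(\IS^2)$, whose dual is $V^* = \Lip_0(\IS^2)$, applied to the cartoon function \eqref{eq:jump-u} with jump values $u^+, u^- \in \IP(\IS^2) \subset \KR(\IS^2)$. Once the hypotheses of that proposition are verified, it yields $\TV_{\KR}(u) = \IH^{d-1}(\partial U)\cdot \|u^+ - u^-\|_{\KR}$, and it then only remains to identify the $\KR$-norm of the difference of two probability measures with their Wasserstein distance.

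First I would invoke the isometric embedding of $(\IP(\IS^2), W_1)$ into $\KR(\IS^2)$ established in Appendix~\ref{apdx:kr}: for $\mu,\nu \in \IP(\IS^2)$ one has $\|\mu - \nu\|_{\KR} = W_1(\mu,\nu)$, since the Kantorovich--Rubinstein norm is precisely the norm induced by duality with $\Lip_0(\IS^2)$, and the Kantorovich duality \eqref{eq:krdual} identifies it with $W_1$ on differences of probability measures. In particular $\|u^+ - u^-\|_{\KR} = W_1(u^+, u^-)$, so the right-hand side of \eqref{eq:tv-jump-penalty} becomes $\IH^{d-1}(\partial U)\cdot W_1(u^+, u^-)$, which is exactly the claimed value.

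The substantive step is verifying that the coupled norm \eqref{eq:lip-norm-Rd} (and the norm \eqref{eq:product-norm} with $s=2$) on $[\Lip_0(\IS^2)]^d$ satisfies the two structural hypotheses \eqref{eq:product-norm-lower-bound} and \eqref{eq:product-norm-upper-bound}. For the upper bound \eqref{eq:product-norm-upper-bound} I would feed the scaled-copy configuration $p = (x_1 q, \dots, x_d q)$ into the product-norm definition and show that, after factoring out $q$, it reduces to the Euclidean length of $x$; for the $s=2$ norm this is immediate from homogeneity and the $\ell^2$-combination, while for the coupled norm it follows because the vector-valued Lipschitz seminorm of $z \mapsto x\,q(z)$ equals $\|x\|_2$ times the scalar Lipschitz seminorm of $q$. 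For the lower bound \eqref{eq:product-norm-lower-bound}, the pairing $\sum_i x_i \langle p_i, v\rangle$ is controlled by a Cauchy--Schwarz inequality in the index $i$, combined with $\langle p_i, v\rangle \le \|p_i\|_{V^*}\|v\|_V$ and the fact that the product norm dominates the $\ell^2$-norm of the component dual norms.

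I expect the main obstacle to lie in the coupled norm \eqref{eq:lip-norm-Rd}, whose definition (given in Appendix~\ref{apdx:lip-product-norm}) treats $p$ as a single $\R^d$-valued Lipschitz map rather than as an uncoupled tuple, so neither inequality can be read off componentwise. One must carefully relate the joint Lipschitz seminorm of the vector-valued function to the individual scalar seminorms and to the Euclidean structure on $\R^d$; this is precisely where the rotationally compatible coupling of the norm enters. Once this verification is complete, Proposition~\ref{prop:cartoon-tv} applies verbatim, and combining it with the isometry $\|u^+ - u^-\|_{\KR} = W_1(u^+, u^-)$ finishes the proof.
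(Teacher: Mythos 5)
Your proposal is correct and follows essentially the same route as the paper, which simply derives the corollary from Proposition~\ref{prop:cartoon-tv} applied to $V=\KR(\IS^2)$, $V^*=\Lip_0(\IS^2)$, using the Kantorovich--Rubinstein identity $\|u^+-u^-\|_{\KR}=W_1(u^+,u^-)$ and the norm verifications of Appendix~\ref{apdx:product-norms}. The only shortcut you could have taken for the ``obstacle'' you flag is to observe that the coupled norm \eqref{eq:lip-norm-Rd} is exactly the operator norm \eqref{eq:operator-norm} for $V=\KR(X)$ (since the unit ball of $\KR(X)$ is the closed convex hull of the molecules $(\delta_z-\delta_{z'})/d(z,z')$), so conditions \eqref{eq:product-norm-lower-bound} and \eqref{eq:product-norm-upper-bound} follow directly from Proposition~\ref{prop:product-norms}.
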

The corollary was proven directly in \cite[Prop.~1]{Vogt2017}.
In the functional-analytic framework established above, it now follows as a simple
corollary to Proposition~{\ref{prop:cartoon-tv}}.

Moreover, beyond the theoretical results given in \cite{Vogt2017}, we now have
a rigorous framework that ensures measurability of the integrands in
\eqref{eq:W1TV-functional}, which is crucial for well-definedness.
Furthermore, Theorem~\ref{thm:existence} on the existence of minimizers provides
an important step in proving well-posedness of the variational model
\eqref{eq:W1TV-functional}.
\section{Numerical Scheme}\label{sec:numerics}
As in \cite{Vogt2017}, we closely follow the discretization scheme from
\cite{lell2013} in order to formulate the problem in a saddle-point form that is
amenable to standard primal-dual algorithms
\cite{chamb2011,pock2011,goldstein2013,goldstein2015a,goldstein2015b}.
\subsection{Discretization}
\begin{figure}\centering
    \begin{tikzpicture}
        \node[inner sep=7pt] (sphere-plot) at (0,0) {
            \includegraphics[%
                trim=50 50 50 50,width=0.9\textwidth
            ]{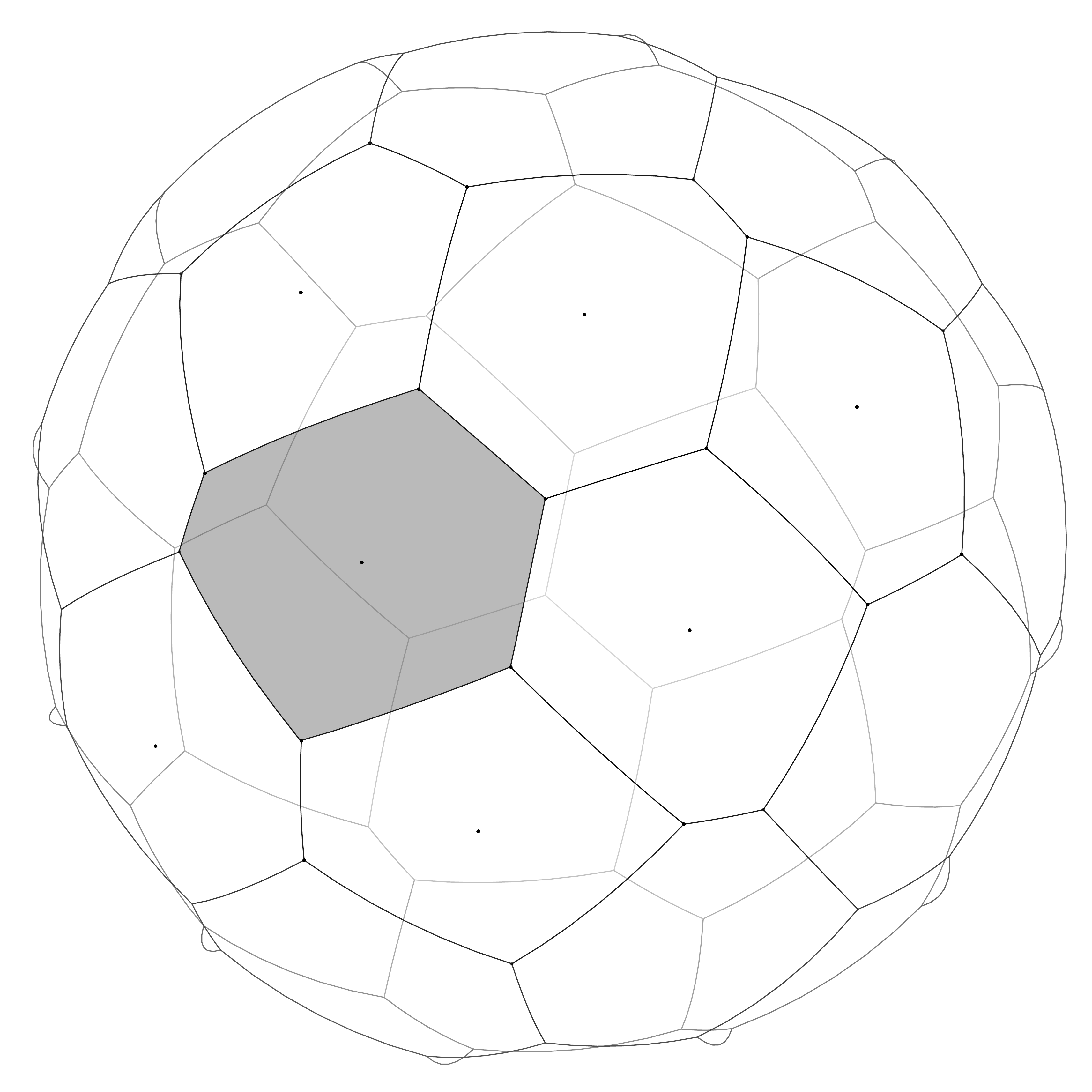}
        };
        
        \ExtractImgDims{sphere-plot};
        \begin{scope}[x=\imgw,y=\imgh,xshift=\imgx,yshift=\imgy]
            \node[circle,fill,inner sep=1.5pt] at (0.267,0.198) {};
            \node[circle,fill,inner sep=1.5pt] at (0.632,0.233) {};
            \node[circle,fill,inner sep=1.5pt] at (0.654,0.595) {};
            \node[circle,fill,inner sep=1.5pt] at (0.425,0.845) {};
            \node[circle,fill,inner sep=1.5pt] at (0.265,0.314) {};
            \node[circle,fill,inner sep=1.5pt] at (0.148,0.495) {};
            \node[circle,fill,inner sep=1.5pt] at (0.173,0.571) {};
            \node[circle,fill,inner sep=1.5pt] at (0.378,0.651) {};
            \node[circle,fill,inner sep=1.5pt] at (0.499,0.546) {};
            \node[circle,fill,inner sep=1.5pt] (yj) at (0.466,0.383) {};
            \node[below right=0.03 and 0.17 of yj,inner sep=1pt] (yj-label) {\Large$y^j$};
            \draw[thick,>=latex,->] (yj-label) to[bend left=2] (yj);

            \node[inner sep=0] at (0.128,0.309) {\rule{4.5pt}{4.5pt}};
            \node[inner sep=0] at (0.54,0.72) {\rule{4.5pt}{4.5pt}};
            \node[inner sep=0] at (0.265,0.745) {\rule{4.5pt}{4.5pt}};
            \node[inner sep=0] at (0.80,0.63) {\rule{4.5pt}{4.5pt}};
            \node[inner sep=0] (zk1) at (0.436,0.226) {\rule{4.5pt}{4.5pt}};
            \node[inner sep=0] (zk2) at (0.64,0.42) {\rule{4.5pt}{4.5pt}};
            \node[inner sep=0] (zk) at (0.32,0.48) {\rule{4.5pt}{4.5pt}};
            \node[below left=0.04 and 0.18 of zk,inner sep=1pt] (zk-label) {\Large$z^k$};
            \draw[thick,>=latex,->] (zk-label) to[bend left=5] (zk);

            \draw[dashed] (yj) to (zk);
            \draw[dashed] (yj) to (zk1);
            \draw[dashed] (yj) to (zk2);

            \node[inner sep=0] (mk) at (0.39,0.57) {};
            \node[above right=0.06 and 0.13 of mk,inner sep=1pt] (mk-label) {\Large$m^k$};
            \draw[thick,>=latex,->] (mk-label) to[bend right=5] (mk);
        \end{scope}
    \end{tikzpicture}
    \caption{%
        Discretization of the unit sphere $\IS^2$.
        Measures are discretized via their average on the subsets $m^k$.
        Functions are discretized on the points $z^k$ (dot markers), their
        gradients are discretized on the $y^j$ (square markers).
        Gradients are computed from points in a neighborhood $\nbhd_j$ of
        $y^j$.
        The neighborhood relation is depicted with dashed lines.       
        The discretization points were obtained by recursively subdividing the
        $20$ triangular faces of an icosahedron and projecting the vertices to
        the surface of the sphere after each subdivision.
    }\label{fig:discr-sphere}
\end{figure}
We assume a $d$-dimensional image domain $\Omega$, $d = 2,3$, that is
discretized using $n$ points $x^1, \dots, x^n \in \Omega$.
Differentiation in $\Omega$ is done on a staggered grid with Neumann boundary
conditions such that the dual operator to the differential operator $D$ is the
negative divergence with vanishing boundary values.

The framework presented in Section \ref{sec:theory} applies to arbitrary
compact metric spaces $X$.
However, for an efficient implementation of the Lipschitz constraint in
\eqref{eq:measure-valued-tv}, we will assume an $s$-dimensional manifold
$X = \IM$.
This includes the case of ODF-valued images ($X = \IM = \IS^2$, $s=2$).
For future generalizations to other manifolds, we give the discretization in
terms of a general manifold $X = \IM$ even though this means neglecting the
reasonable parametrization of $\IS^2$ using spherical harmonics in the case
of DW-MRI.
Moreover, note that the following discretization does not apply to arbitrary
metric spaces $X$.

Now, let $\IM$ be decomposed (Fig.~\ref{fig:discr-sphere}) into $l$ disjoint
measurable (not necessarily open or closed) sets
\begin{equation}
    m^1, \dots, m^l \subset \IM
\end{equation}
with $\bigcup_k m^k = \IM$ and volumes $b^1, \dots, b^l \in \R$ with respect to
the Lebesgue measure on $\IM$.
A measure-valued function $u\colon \Omega \to \IP(\IM)$ is discretized as its
average $u \in \R^{n,l}$ on the volume $m^k$, i.e.,
\begin{equation}
    u_k^i := u_{x^i}(m^k)/b_{k}.
\end{equation}

Functions $p \in C_c^1(\Omega, \Lip(X,\R^d))$ as they appear for example in our
proposed formulation of $\TV$ in \eqref{eq:bv-valued-tv} are identified with
functions $p\colon \Omega \times \IM \to \R^d$ and discretized as
$p \in \R^{n,l,d}$ via $p_{kt}^i := p_t(x^i, z^k)$ for a fixed choice of
discretization points
\begin{equation}
    \forall k=1,\dots,l: \quad z^k \in m^k \subset \IM.
\end{equation}
The dual pairing of $p$ with $u$ is discretized as
\begin{equation}\label{eq:discrete-int}
    \langle u, p \rangle_b := \sum_{i,k} b_{k} u_k^i p_k^i.
    \vspace{-7.5pt}
\end{equation}

\subsubsection{Implementation of the Lipschitz Constraint}
The Lipschitz constraint in the definition \eqref{eq:krdual} of $W_1$ and in
the definition \eqref{eq:measure-valued-tv} of $\TV_{\KR}$ is implemented as a
norm constraint on the gradient.
Namely, for a function $p\colon \IM \to \R$, which we discretize as $p \in \R^{l}$,
$p_k := p(z^k)$, we discretize gradients on a staggered grid of $m$ points
\begin{equation}
    y^1, \dots, y^m \in \IM,
\end{equation}
such that each of the $y^j$ has $r$ neighboring points among the $z^k$
(Fig.~\ref{fig:discr-sphere}):
\begin{equation}
    \forall j=1,\dots,m: \quad
        \nbhd_j \subset \{1, \dots, l\},\quad \#\nbhd_j = r.
\end{equation}
The gradient $g \in \R^{m,s}$, $g^j := Dp(y^j)$, is then defined as the vector
in the tangent space at $y^j$ that, together with a suitable choice of the
unknown value $c := p(y^j)$, best explains the known values of $p$ at the
$z^k$ by a first-order Taylor expansion
\begin{equation}
    p(z^k) \approx p(y^j) + \langle g^j, v^{jk} \rangle,
        \quad k \in \nbhd_j,
\end{equation}
where $v^{jk} := \exp^{-1}_{y^j}(z^k) \in T_{y^j}\IM$ is the Riemannian inverse
exponential mapping of the neighboring point $z^k$ to the tangent space at
$y^j$.
More precisely,
\begin{equation}\label{eq:argmin-gradient}
    g^j := \argmin_{g \in T_{y^j}\IM} \min_{c\in\R} \sum_{k \in \nbhd_j}
        \left(c + \langle g, v^{jk} \rangle - p(z^k)\right)^2.
\end{equation}
Writing the $v^{jk}$ into a matrix $M^j \in \R^{r,s}$ and encoding the
neighboring relations as a sparse indexing matrix $P^j \in \R^{r,l}$,
we obtain the explicit solution for the value $c$ and gradient $g^j$ at the
point $y^j$ from the first-order optimality conditions
of \eqref{eq:argmin-gradient}:
\begin{align}
    &c = p(y^j) = \frac{1}{r}(e^T P^j p - e^T M^j g^j), \\
    &(M^j)^T E M^j g^j = (M^j)^T E P^j p,
\end{align}
where $e := (1,\dots,1) \in \R^r$ and $E := (I - \frac{1}{r}ee^T)$.
The value $c$ does not appear in the linear equations for $g^j$ and is not
needed in our model, therefore we can ignore the first line.
The second line, with
$A^j := (M^j)^T E M^j \in \R^{s,s}$ and $B^j := (M^j)^T E \in \R^{s,r}$,
can be concisely written as
\begin{equation}
    A^j g^j = B^j P^j p, \text{ for each } j \in \{1, \dots, m \}.
\end{equation}
Following our discussion about the choice of norm in
Appendix~\ref{apdx:product-norms}, the (Lipschitz) norm constraint
$\|g_j\| \leq 1$ can be implemented using the Frobenius norm or the spectral
norm, both being rotationally invariant and both acting as desired on
cartoon-like jump functions (cf. Prop. \ref{prop:cartoon-tv}).
\subsubsection{Discretized $W_1$-$\TV$ Model}\label{sec:w1-tv}
Based on the above discretization, we can formulate saddle-point forms for
\eqref{eq:W1TV-functional} that allow to apply a primal-dual
first-order method such as \cite{chamb2011}.
In the following, the measure-valued input or reference image is given by
$f \in \R^{l,n}$ and the dimensions of the primal and dual variables are
\begin{align}
    & u \in \R^{l,n}, && p \in \R^{l,d,n}, && g \in \R^{n,m,s,d}, \\
    & p_0 \in \R^{l,n}, && g_0 \in \R^{n,m,s},
\end{align}
where $g^{ij} \approx D_z p(x^i, y^j)$ and $g_0^{j} \approx D p_0(y^j)$.

Using a $W_1$ data term, the saddle point form of the overall problem reads
\begin{align}
    \min_{u} \max_{p,g} \quad
        & W_1(u,f) + \langle Du, p \rangle_b \\
    \text{s.t.}\quad
        & u^i \geq 0, ~\langle u^i, b \rangle = 1, ~\forall i, \\
        & A^j g^{ij}_t = B^j P^j p^i_t ~\forall i,j,t, \\
        & \|g^{ij}\| \leq \lambda ~\forall i,j
\end{align}
or, applying the Kantorovich-Rubinstein duality \eqref{eq:krdual} to the
data term,
\begin{align}\label{eq:W1TV-saddle-point}
    \min_{u} \max_{p, g, p_0, g_0} \quad
        & \langle u-f, p_0 \rangle_b + \langle Du, p \rangle_b \\
    \text{s.t.}\quad
        & u^i \geq 0, ~\langle u^i, b \rangle = 1 ~\forall i, \\
        & A^j g^{ij}_t = B^j P^j p^i_t,
          ~\|g^{ij}\| \leq \lambda ~\forall i,j,t, \\
        & A^j g^{ij}_0 = B^j P^j p^i_0,
          ~\|g^{ij}_0\| \leq 1 ~\forall i,j.
\end{align}
\subsubsection{Discretized $L^2$-$\TV$ Model}\label{sec:l2-tv}
For comparison, we also implemented the Rudin-Osher-Fatemi (ROF) model
\begin{equation}\label{eq:QBROF}
    \inf_{u:\Omega \to \IP(\IS^2)}
        \int_\Omega \int_{\IS^2} (f_x(z) - u_x(z))^2 \dd z \dd x
        + \lambda \TV(u)
\end{equation}
using $\TV=\TV_{\KR}$.
The quadratic data term can be implemented using the saddle point form
\begin{align}\label{eq:L2TV-saddle-point}
    \min_{u} \max_{p,g} \quad
        & \langle u-f, u-f \rangle_b
            + \langle Du, p \rangle_b \\
    \text{s.t.}\quad
        & u^i \geq 0, ~\langle u^i, b \rangle = 1, \\
        & A^j g^{ij}_t = B^j P^j p^i_t,
          ~\|g^{ij}\| \leq \lambda ~\forall i,j,t.
\end{align}
From a functional-analytic viewpoint, this approach requires to assume that
$u_x$ can be represented by an $L^2$ density, suffers from well-posedness
issues, and ignores the metric structure on $\IS^2$ as mentioned in the
introduction.
Nevertheless we include it for comparison, as the $L^2$ norm is a common choice
and the discretized model is a straightforward modification of the
$W_1$-$\TV$ model. %
\subsection{Implementation Using a Primal-Dual Algorithm}
Based on the saddle-point forms \eqref{eq:W1TV-saddle-point} and
\eqref{eq:L2TV-saddle-point}, we applied the primal-dual first-order method
proposed in \cite{chamb2011} with the adaptive step sizes from
\cite{goldstein2015a}.
We also evaluated the diagonal preconditioning proposed in \cite{pock2011}.
However, we found that while it led to rapid convergence in some cases, the
method frequently became unacceptably slow before reaching the desired accuracy.
The adaptive step size strategy exhibited a more robust overall convergence.

The equality constraints in \eqref{eq:W1TV-saddle-point} and
\eqref{eq:L2TV-saddle-point} were included into the objective function by
introducing suitable Lagrange multipliers.
As far as the norm constraint on $g_0$ is concerned, the spectral and Frobenius
norms agree, since the gradient of $p_0$ is one-dimensional.
For the norm constraint on the Jacobian $g$ of $p$, we found the spectral and
Frobenius norm to give visually indistinguishable results.

Furthermore, since $\IM = \IS^2$ and therefore $s=2$ in the ODF-valued case,
explicit formulas for the orthogonal projections on the spectral norm balls that
appear in the proximal steps are available~\cite{goldluecke2012natural}.
The experiments below were calculated using spectral norm constraints, as in our
experience this choice led to slightly faster convergence.
\section{Results}\label{sec:results}
We implemented our model in Python~3.5 using the libraries NumPy~1.13,
PyCUDA~2017.1 and CUDA~8.0.
The examples were computed on an Intel Xeon X5670 2.93GHz with 24 GB of main
memory and an NVIDIA GeForce GTX 480 graphics card
with 1,5 GB of dedicated video memory.
For each step in the primal-dual algorithm, a set of kernels was launched on the
GPU, while the primal-dual gap was computed and termination criteria were
tested every $5\,000$ iterations on the CPU.

For the following experiments, we applied our models presented in Sections
\ref{sec:w1-tv} ($W_1$-$\TV$) and \ref{sec:l2-tv} ($L_2$-$\TV$) to ODF-valued
images reconstructed from HARDI data using the reconstruction methods that are
provided by the Dipy project~\cite{garyf2014}:
\begin{itemize}
\item For voxel-wise QBI reconstruction within constant solid angle (CSA-ODF)
    \cite{aganj2009}, we used \texttt{CsaOdfModel} from \texttt{dipy.reconst.shm}
    with spherical harmonics functions up to order $6$.
\item We used the implementation \texttt{ConstrainedSphericalDeconvModel} as
    provided with \texttt{dipy.reconst.csdeconv} for voxel-wise CSD
    reconstruction as proposed in \cite{tournier2007}.
\end{itemize}
The response function that is needed for CSD reconstruction was determined
using the recursive calibration method \cite{tax2014} as implemented in
\texttt{recursive\_response}, which is also part of
\texttt{dipy.reconst.csdeconv}.
We generated the ODF plots using VTK-based \texttt{sphere\_funcs}
from \texttt{dipy.viz.fvtk}.

It is equally possibly to use other methods for Q-ball reconstruction for the
preprocessing step, or even integrate the proposed $\TV$-regularizer directly
into the reconstruction process.
Furthermore, our method is compatible with different numerical representations
of ODFs, including sphere discretization \cite{goh2009}, spherical
harmonics \cite{aganj2009}, spherical wavelets \cite{kezele2008}, ridgelets
\cite{michailovich2008} or similar basis functions \cite{kaden2011,ahrens2013},
as it does not make any assumptions on regularity or symmetry of the ODFs.
We leave a comprehensive benchmark to future work, as the main goal of this
work is to investigate the mathematical foundations.
\subsection{Synthetic Data}
\subsubsection{$L^2$-$\TV$ vs. $W_1$-$\TV$}
\begin{figure*}\centering
    \includegraphics[draft=false,
        trim=0 645 110 755,clip,width=\textwidth
    ]{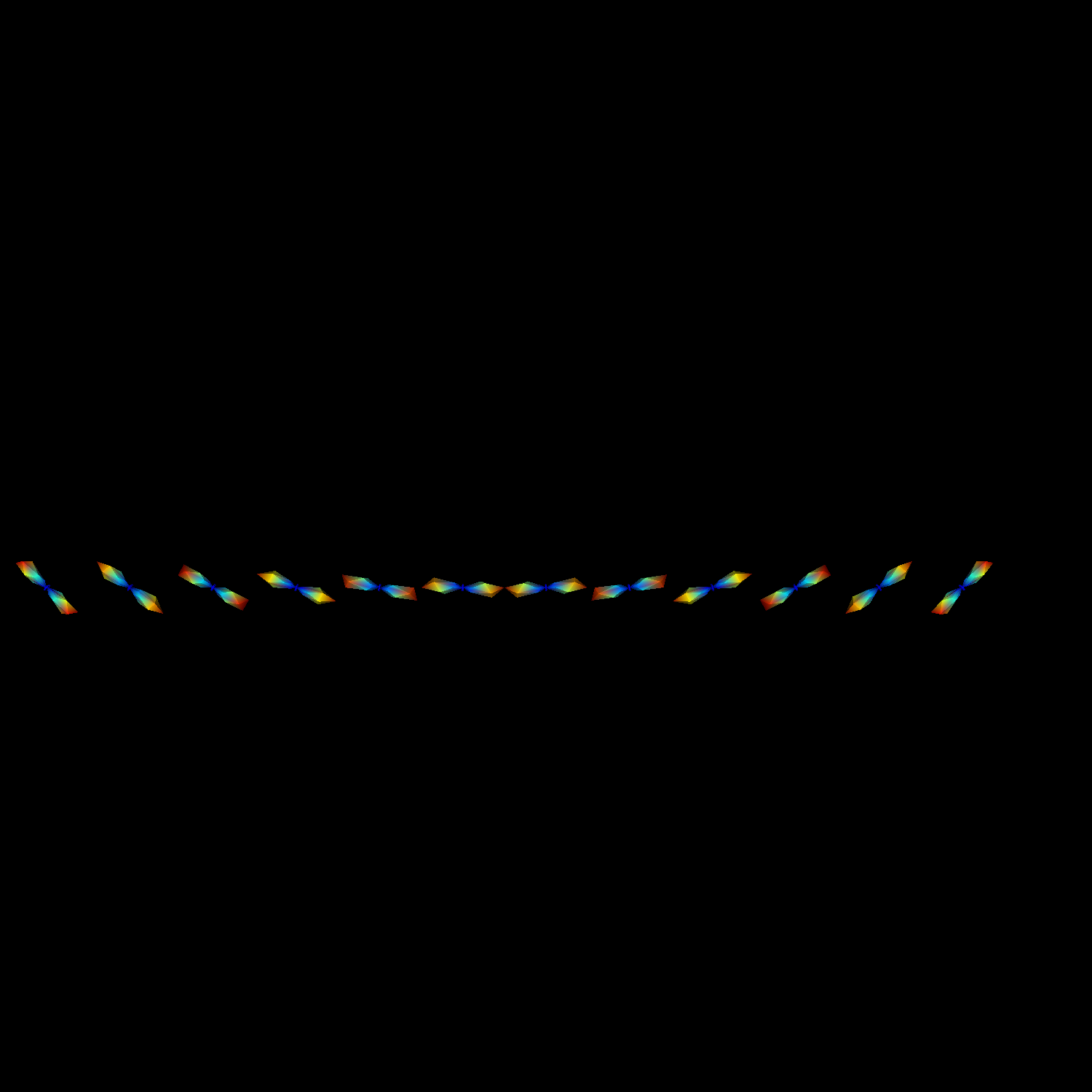}\\
    \includegraphics[draft=false,
        trim=0 645 110 755,clip,width=\textwidth
    ]{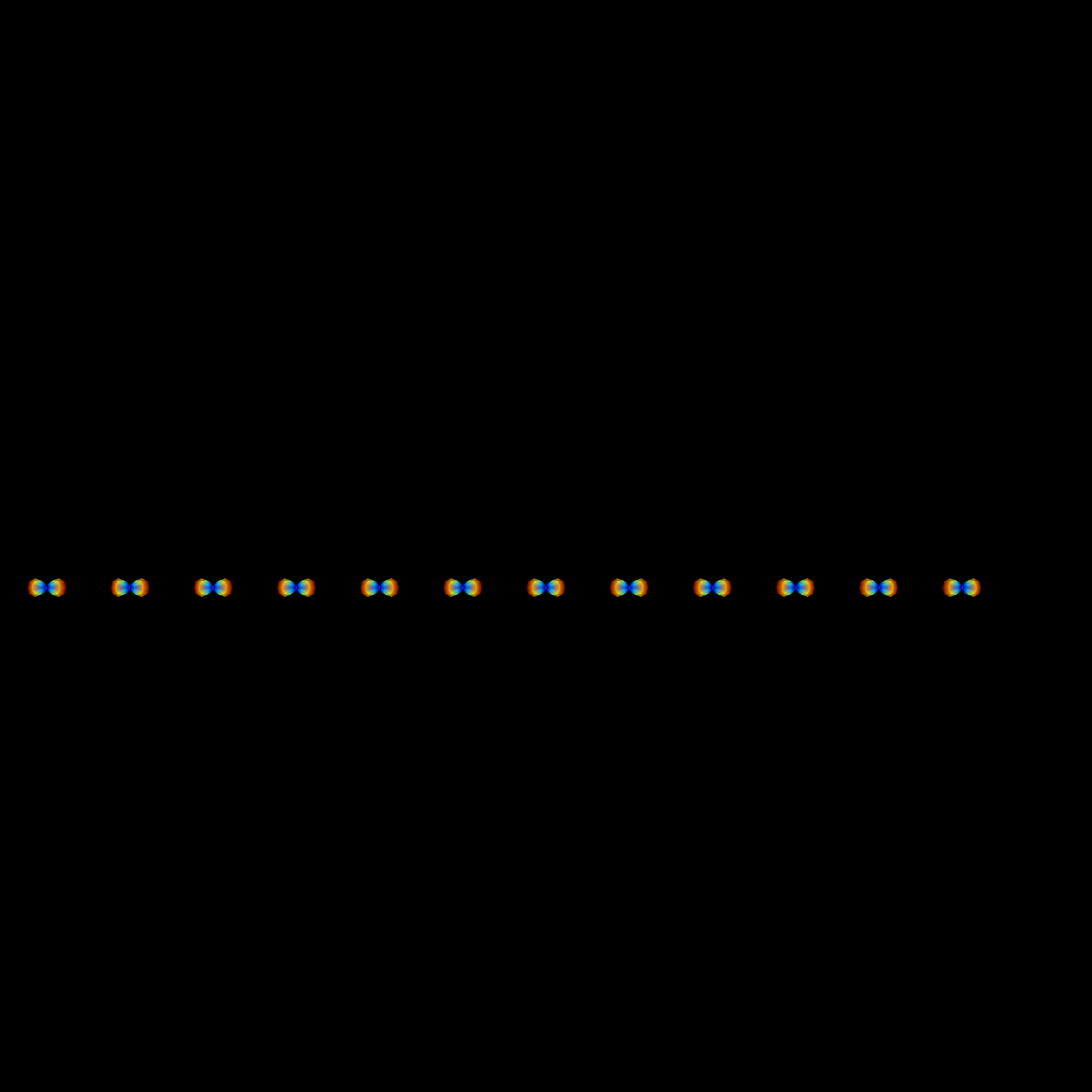}\\
    \includegraphics[draft=false,
        trim=0 645 110 755,clip,width=\textwidth
    ]{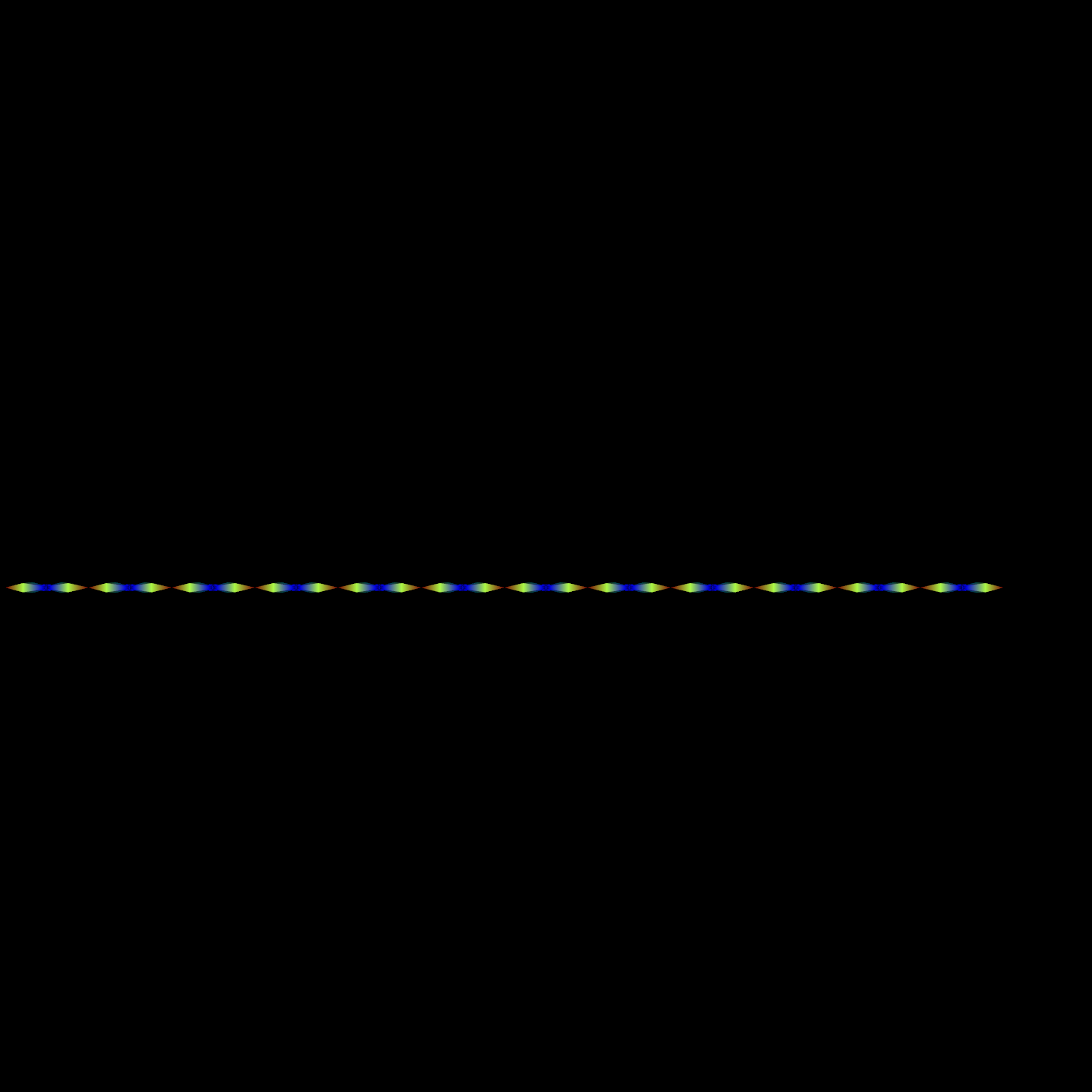}
    \caption{%
        \emph{Top:} 1D image of synthetic unimodal ODFs where the angle of the
            main diffusion direction varies linearly from left to right.
            This is used as input image for the center and bottom row.
        \emph{Center:} Solution of $L^2$-$\TV$ model with $\lambda=5$.
        \emph{Bottom:} Solution of $W_1$-$\TV$ model with $\lambda=10$.
        In both cases, the regularization parameter $\lambda$ was chosen
        sufficiently large to enforce a constant result.
        The quadratic data term mixes all diffusion directions into one blurred
        ODF, whereas the Wasserstein data term produces a tight ODF that is
        concentrated close to the median diffusion direction.
    }\label{fig:plot-oned}
\end{figure*} %
We demonstrate the different
behaviors of the $L^2$-$\TV$ model compared to the $W_1$-$\TV$ model with the
help of a one-dimensional synthetic image (Fig. \ref{fig:plot-oned}) generated using the
multi-tensor simulation method \texttt{multi\_tensor} from the module
\texttt{dipy.sims.voxel} which is based on \cite{stejskal1965} and
\cite[p.~42]{descoteaux2008}; see also \cite{Vogt2017}.

By choosing very high regularization parameters $\lambda$, we enforce
the models to produce constant results.
The $L^2$-based data term prefers a blurred mixture of diffusion directions,
essentially averaging the probability measures.
The $W_1$ data term tends to concentrate the mass close to the median of the
diffusion directions on the unit sphere, properly taking into account the metric
structure of $\IS^2$. %
\subsubsection{Scale-space Behavior}\label{sec:fiber-phantom}
To demonstrate the scale space behavior of our variational models, we
implemented a 2-D phantom of two crossing fibre bundles as depicted in
Fig.~\ref{fig:phantom-plot}, inspired by~\cite{ouyang2014}.
From this phantom we computed the peak directions of fiber orientations on a
$15 \times 15$ grid.
This was used to generate synthetic HARDI data simulating a DW-MRI
measurement with $162$ gradients and a $b$-value of $3\,000$, again using the
multi-tensor simulation framework from \texttt{dipy.sims.voxel}.

We then applied our models to the CSA-ODF reconstruction of this data set for
increasing values of the regularization parameter $\lambda$ in order to
demonstrate the scale-space behaviors of the different data terms
(Fig.~\ref{fig:phantom-scalespace}).

As both models use the proposed $\TV$ regularizer, edges are preserved.
However, just as classical ROF models tend to reduce jump sizes across edges,
and lose contrast, the $L^2$-$\TV$ model results in the background and
foreground regions becoming gradually more similar as regularization strength
increases.
The $W_1$-$\TV$ model preserves the unimodal ODFs in the background regions 
and demonstrates a behavior more akin to robust $L^1$-$\TV$ models
\cite{duval2009}, with structures disappearing abruptly  rather than gradually
depending on their scale.
\subsubsection{Denoising}
We applied our model to the CSA-ODF reconstruction of a slice (NumPy coordinates
\texttt{[12:27,22,21:36]}) from the synthetic HARDI data set with added noise at
$\SNR=10$, provided in the ISBI 2013 HARDI reconstruction challenge.
We evaluated the angular precision of the estimated fiber compartments using the
script (\texttt{compute\_local\_metrics.py}) provided on the challenge
homepage~\cite{isbi2013}.

The script computes the mean $\mu$ and standard deviation $\sigma$ of the
angular error between the estimated fiber directions inside the voxels and the
ground truth as also provided on the challenge page
(Fig.~\ref{fig:isbi-groundtruth}).

The noisy input image exhibits a mean angular error of $\mu = 34.52$ degrees
($\sigma = 19.00$).
The reconstructions using $W_1$-$\TV$ ($\mu = 17.73$, $\sigma = 17.25$) and
$L^2$-$\TV$ ($\mu = 17.82$, $\sigma = 18.79$) clearly improve the angular error
and give visually convincing results:
The noise is effectively reduced and a clear trace of fibres becomes visible
(Fig.~\ref{fig:isbi-results}).
In these experiments, the regularizing parameter $\lambda$ was chosen optimally
in order to minimize the mean angular error to the ground truth.
\subsection{Human Brain HARDI Data}
One slice (NumPy coordinates \texttt{[20:50, 55:85, 38]}) of HARDI data from
the human brain data set \cite{rokem2013} was used to demonstrate the
applicability of our method to real-world problems and to images reconstructed
using CSD (Fig.~\ref{fig:realworld-dataset}).
Run times of the $W_1$-$\TV$ and $L^2$-$\TV$ model are approximately 35 minutes
($10^5$ iterations) and 20 minutes ($6\cdot 10^4$ iterations).

As a stopping criterion, we require the primal-dual gap to fall below $10^{-5}$,
which corresponds to a deviation from the global minimum of less than
$0.001 \%$, and is a rather challenging precision for the first-order methods
used.
The regularization parameter $\lambda$ was manually chosen based on visual
inspection.

Overall, contrast between regions of isotropic and
anisotropic diffusion is enhanced.
In regions where a clear diffusion direction is already visible before spatial
regularization, $W_1$-$\TV$ tends to conserve this information better than
$L^2$-$\TV$.
\section{Conclusion and Outlook}
Our mathematical framework for ODF- and, more general, measure-valued images
allows to perform total vari\-a\-tion-based regularization of measure-valued
data without assuming a specific parametrization of ODFs,
while correctly taking the metric on $\IS^2$ into account.
The proposed model penalizes jumps in cartoon-like images proportional to the
jump size measured on the underlying normed space, in our case the
Kan\-toro\-vich-Rubin\-stein space, which is built on the Wasserstein-1-metric.
Moreover, the full variational problem was shown to have a solution and can be
implemented using off-the-shelf numerical methods.

With the first-order primal-dual algorithm chosen in this paper, solving the
underlying optimization problem for DW-MRI regularization is computationally
demanding due to the high dimensionality of the problem.
However, numerical performance was not a priority in this work and can
be improved.
For example, optimal transport norms are known to be efficiently computable
using Sinkhorn's algorithm \cite{cuturi2013}.

A particularly interesting direction for future research concerns extending the approach to
simultaneous reconstruction and regularization, with an additional (non-) linear
operator in the data fidelity term~\cite{aganj2009}.
For example, one could  consider an integrand of the form $\rho(x,u(x)) := d(S(x),Au(x))$ for some
measurements $S$ on a metric space $(H,d)$ and a forward operator $A$ mapping
an ODF $u(x) \in \IP(\IS^2)$ to~$H$.

Furthermore, modifications of our total variation seminorm that take into account
the coupling of positions and orientations according to the physical
interpretation of ODFs in DW-MRI could close the gap to state-of-the-art
approaches such as \cite{duits2011,portegies2017}.

The model does not require symmetry of the ODFs, and therefore
could be adapted to novel asymmetric ODF
approaches~\cite{delp2007,ehric2011,reis2012,CetinKarayumak2018}.
Finally, it is easily extendable to images with values in the
probability space over a different manifold, or even a metric space, as they
appear for example in statistical models of computer vision~\cite{sriv2007}
and in recent lifting approaches \cite{mollenhoff2016,laude2016,astrom2017}
for combinatorial and non-convex optimization problems.
\begin{figure*}[p]
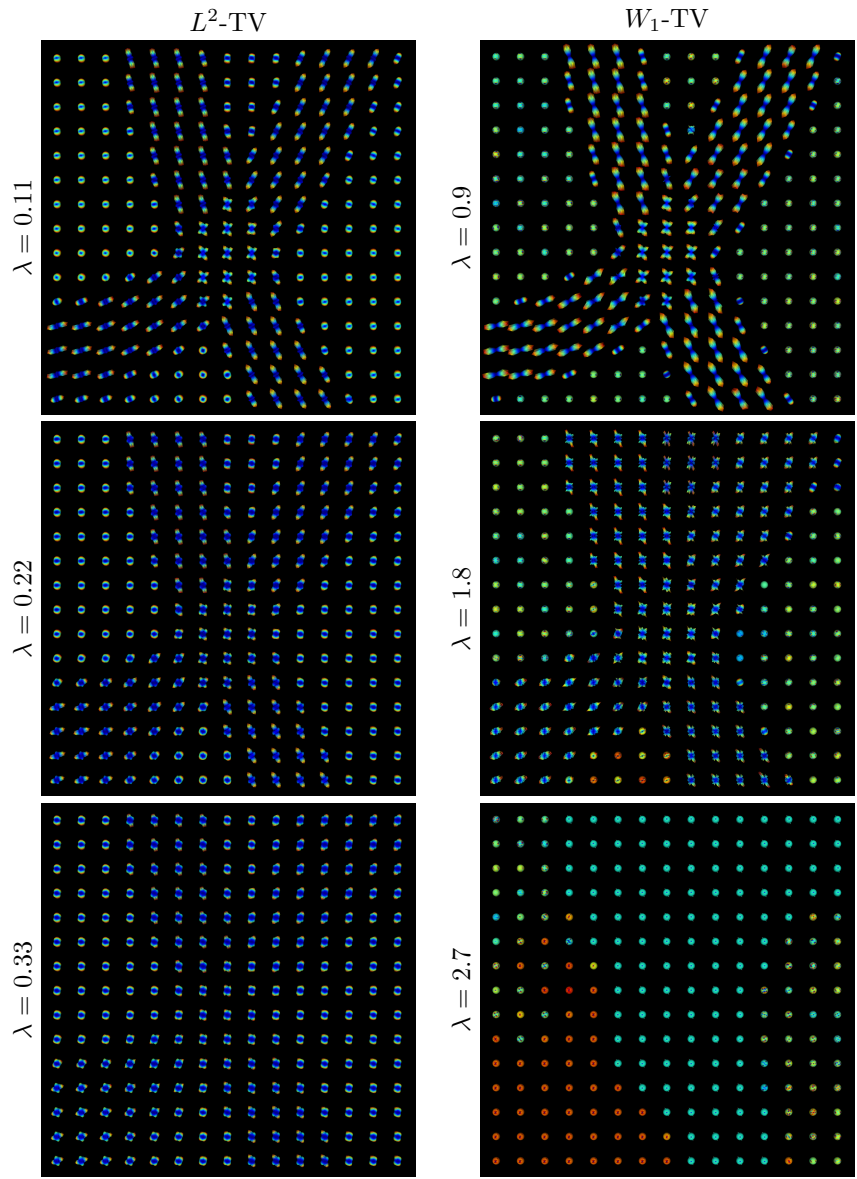
\centering
    \begin{tikzpicture}
        \newlength\picwidth
        \setlength\picwidth{0.41\textwidth}
        \newlength\picx
        \setlength\picx{1.02\picwidth}
        \newlength\picy
        \setlength\picy{\picx}
        
        \def\quadlist{0.11,0.22,0.33}
        \def\wasslist{0.9,1.8,2.7}
        
        \foreach \quadlbd [count=\i,%
                           evaluate=\i as \wasslbd using {{\wasslist}[\i-1]}]%
                 in \quadlist {
                 
            \node (quad\i) at (0,-\i\picy) {
                \includegraphics[%
                    trim=0 0 85 85,clip,width=\picwidth
                ]{fig/plot-phantom-quadratic-\i}
            };
            \node[left=0cm of quad\i,inner sep=0pt,anchor=east]
                {\rotatebox{90}{$\lambda = \quadlbd$}};
            
            \node (wass\i) at (1.15\picx,-\i\picy) {
                \includegraphics[%
                    trim=0 0 90 90,clip,width=\picwidth
                ]{fig/plot-phantom-W1-\i}
            };
            \node[left=0cm of wass\i,inner sep=0pt,anchor=east]
                {\rotatebox{90}{$\lambda = \wasslbd$}};
            
        }
        
        \node[above=0cm of quad1,inner sep=0pt,anchor=south] {$L^2$-$\TV$};
        \node[above=0cm of wass1,inner sep=0pt,anchor=south] {$W_1$-$\TV$};
    \end{tikzpicture}
    \caption{%
        Numerical solutions of the proposed variational models (see Sections
        \ref{sec:w1-tv} and \ref{sec:l2-tv}) applied to the
        phantom (Fig.~\ref{fig:phantom-plot}) for increasing
        values of the regularization parameter $\lambda$. %
        \emph{Left column:} Solutions of $L^2$-$\TV$ model for
            $\lambda = 0.11,\,0.22,\,0.33$.
        \emph{Right column:} Solutions of $W_1$-$\TV$ model for
            $\lambda = 0.9,\,1.8,\,2.7$.
        As is known from classical ROF models, the $L^2$ data term produces a
        gradual transition/loss of contrast towards the constant image,
        while the $W_1$ data term stabilizes contrast along the edges.
    }\label{fig:phantom-scalespace}
\end{figure*} %
\setlength\picwidth{0.68\textwidth}
\begin{figure*}[p]\centering
    \includegraphics[%
        trim=8 8 8 8,clip,width=\picwidth
    ]{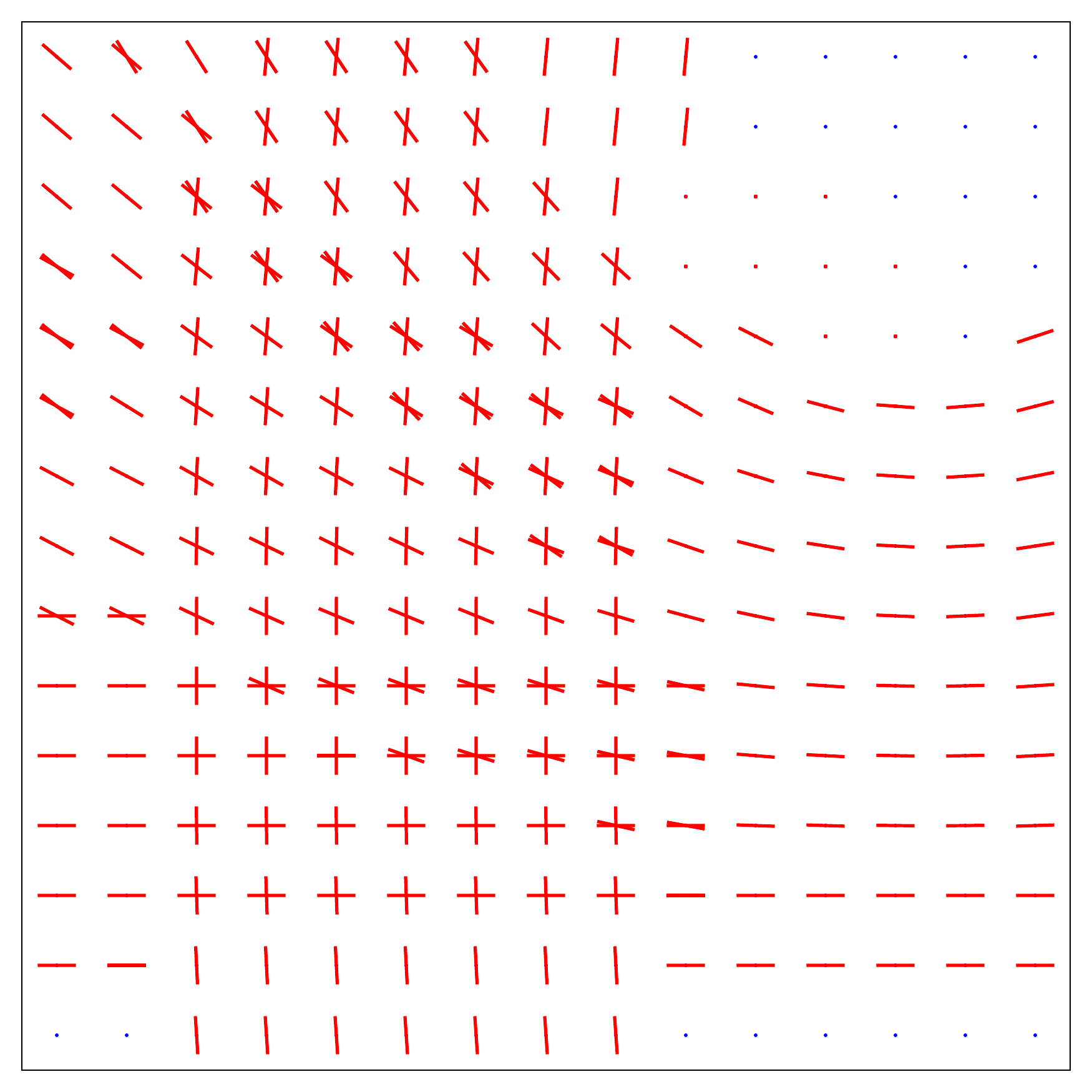}
    \includegraphics[%
        trim=0 0 90 90,clip,width=\picwidth
    ]{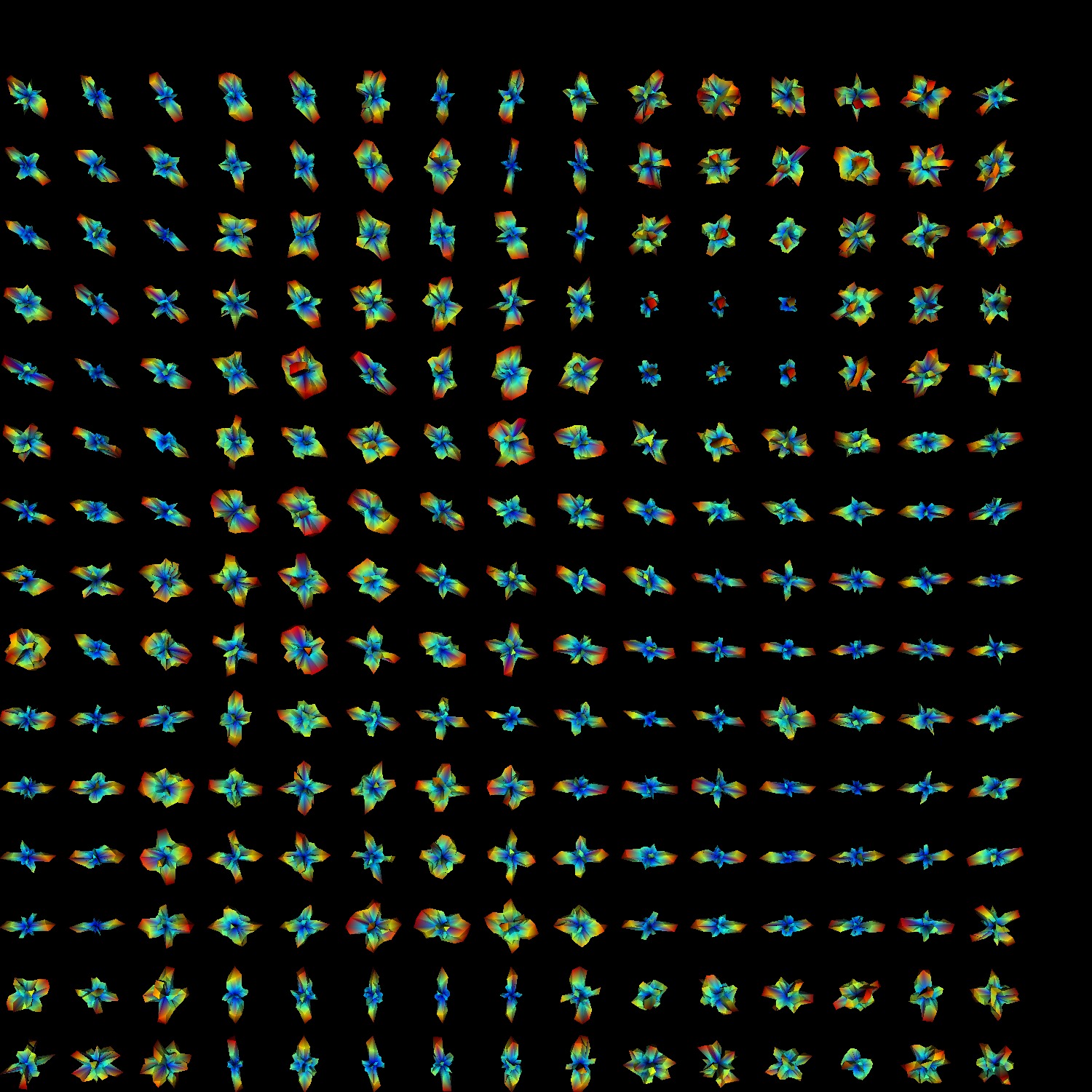}
    \caption{%
        Slice of size $15 \times 15$ from the data provided for the ISBI 2013
        HARDI reconstruction challenge~\protect\cite{isbi2013}.
        \emph{Left:} Peak directions of the ground truth.
        \emph{Right:} Q-ball image reconstructed from the noisy ($\SNR=10$)
            synthetic HARDI data, without spatial regularization.
        The low $\SNR$ makes it hard to visually recognize the fiber directions.
    }\label{fig:isbi-groundtruth}
\end{figure*}
\begin{figure*}[p]\centering\vspace{0.5cm}
    \includegraphics[%
        trim=0 0 90 90,clip,width=\picwidth
    ]{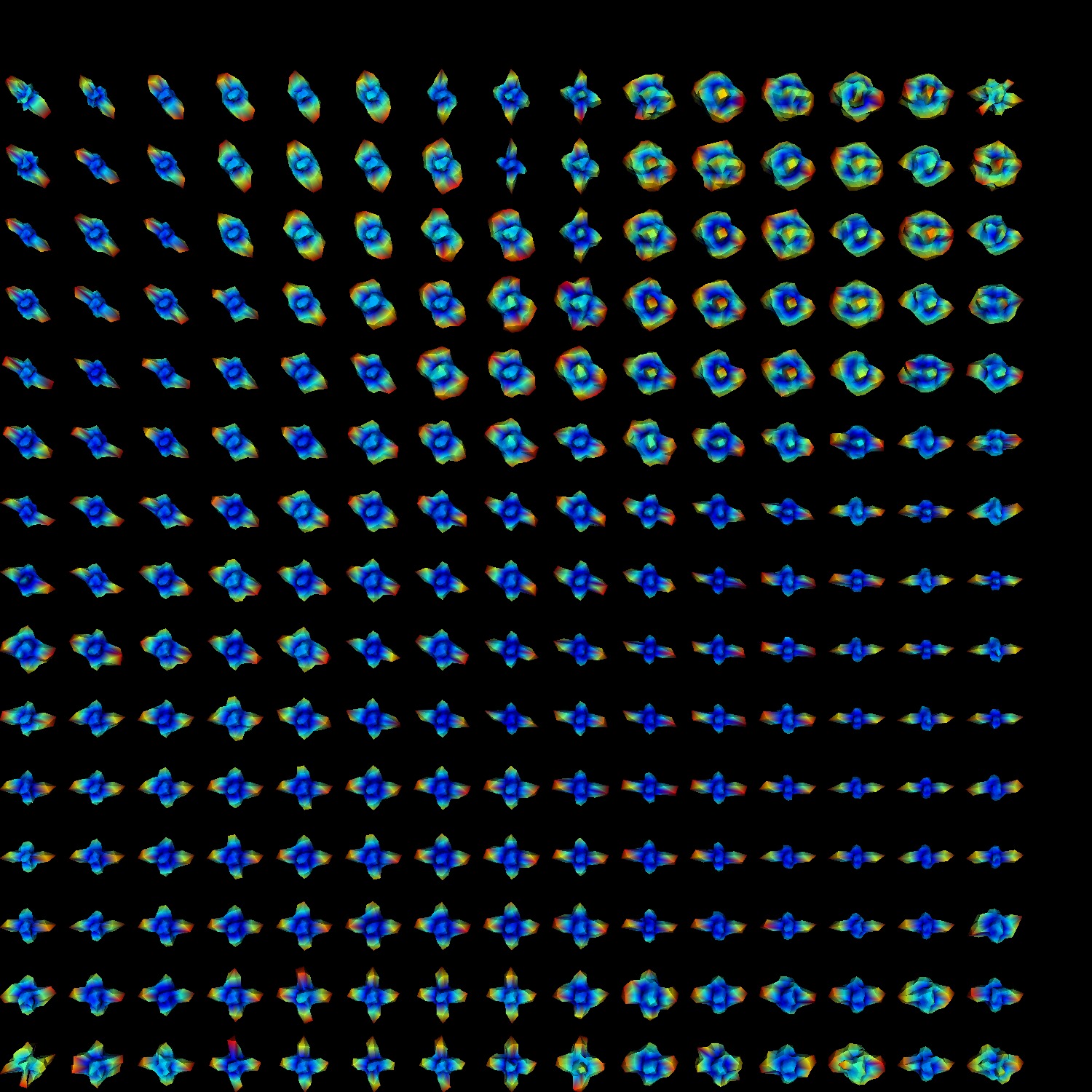}
    \includegraphics[%
        trim=0 0 90 90,clip,width=\picwidth
    ]{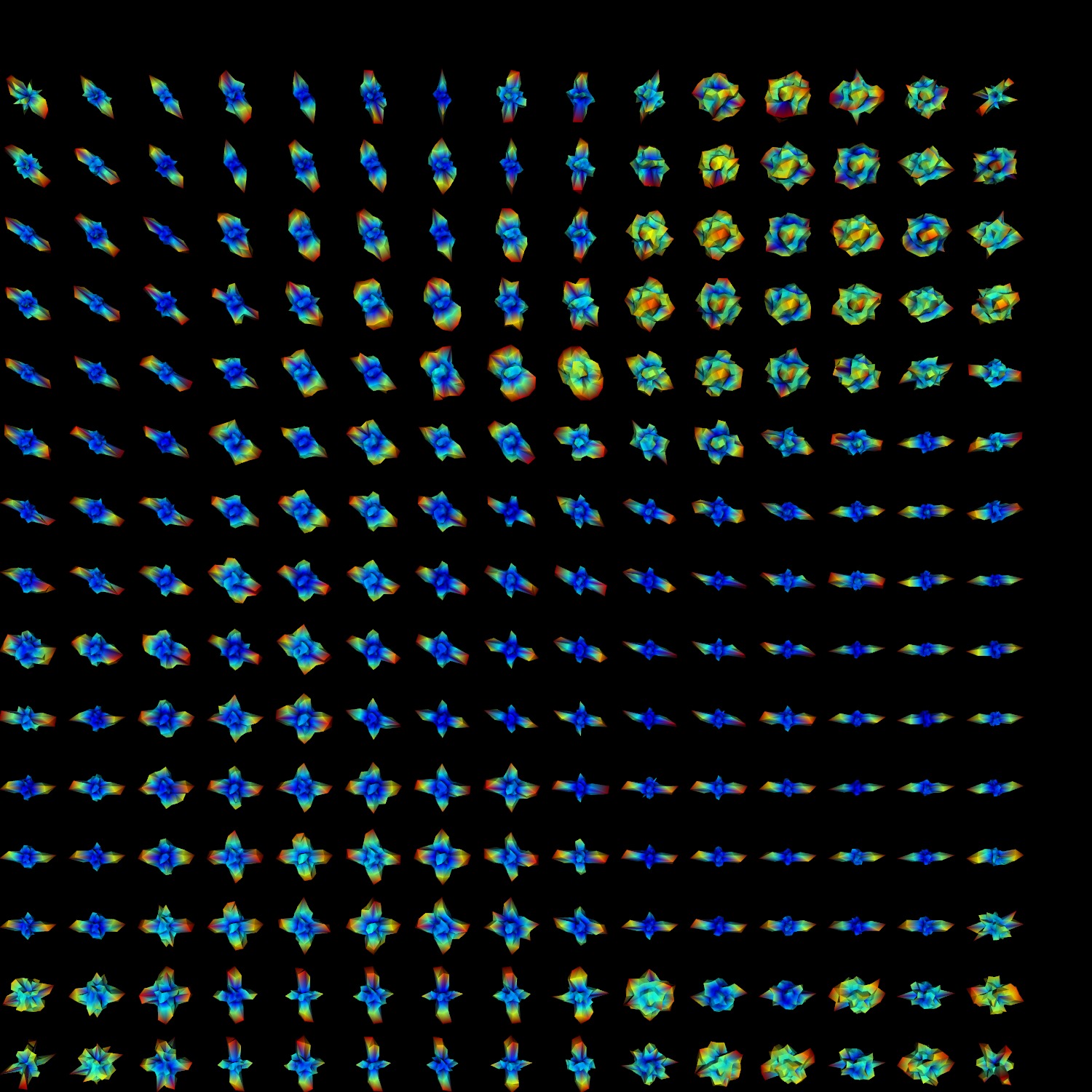}
    \caption{%
        Restored Q-ball images reconstructed from the noisy input data in
        Fig.~{\ref{fig:isbi-groundtruth}}.
        \emph{Left:} Result of the $L^2$-$\TV$ model ($\lambda=0.3$).
        \emph{Right:} Result of the $W_1$-$\TV$ model ($\lambda=1.1$).
        The noise is reduced substantially so that fiber traces are clearly
        visible in both cases.
        The \mbox{$W_1$-$\TV$} model generates less diffuse distributions.
    }\label{fig:isbi-results}
\end{figure*} %
\begin{figure}[p]\centering
    \setlength\picwidth{0.46\textwidth}
    \newlength\spywidth
    \setlength\spywidth{0.4\picwidth}
    \begin{tikzpicture}[spy using outlines={%
            height = \spywidth, width = \spywidth,
            magnification = 3, orange, connect spies
    }]
        \node[inner sep=0] (rw-csd) at (0,0) {
            \includegraphics[%
                trim=10 10 57 57,clip,width=\picwidth
            ]{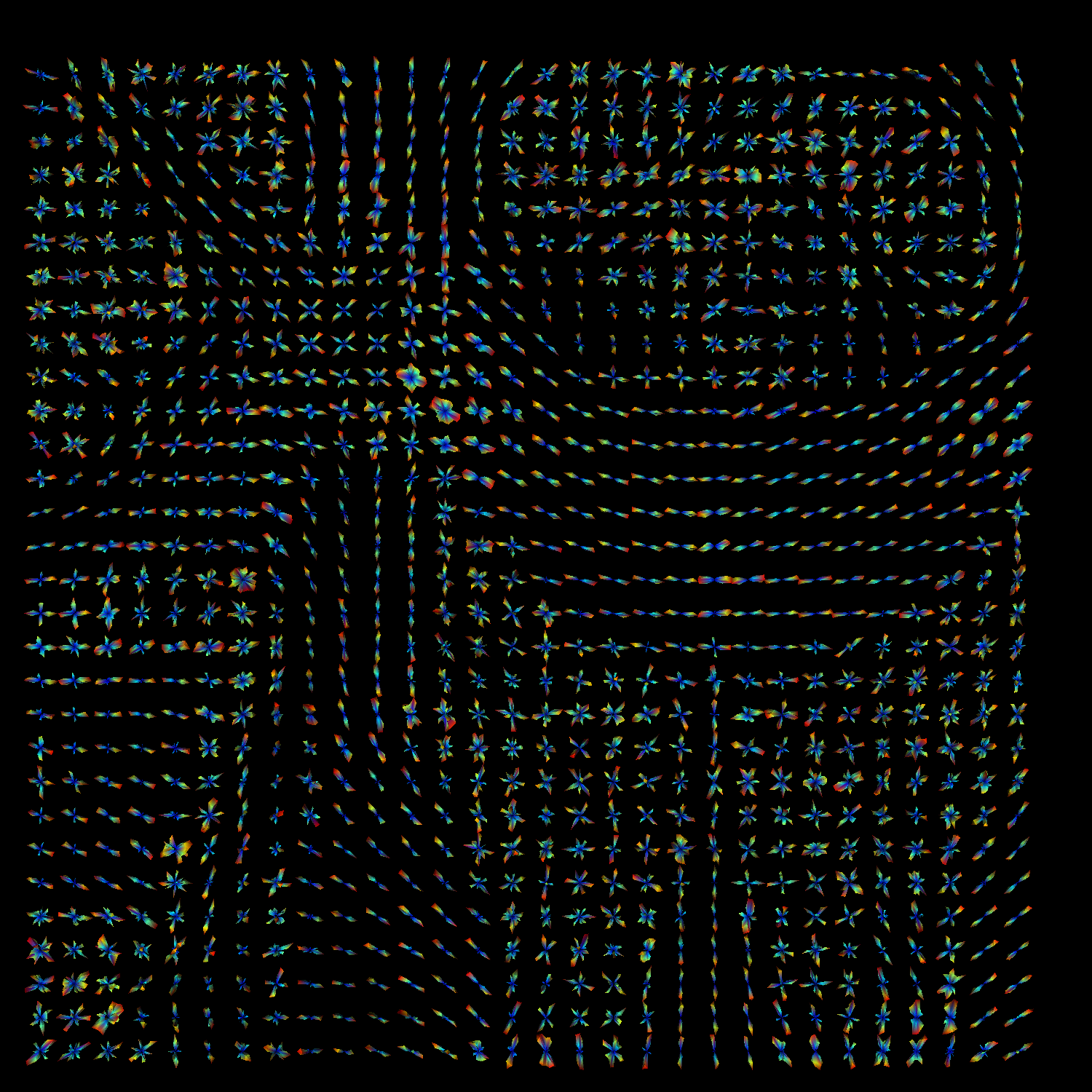}
        };
        \RelativeSpy{rw-spy1}{rw-csd}{(.76,.886)}{(1.07,0.73)}
        \RelativeSpy{rw-spy2}{rw-csd}{(.275,.532)}{(1.07,0.265)}
        
        \node[inner sep=0,below=0.1cm of rw-csd] (rw-csd-quadratic) {%
            \includegraphics[%
                trim=10 10 57 57,clip,width=\picwidth
            ]{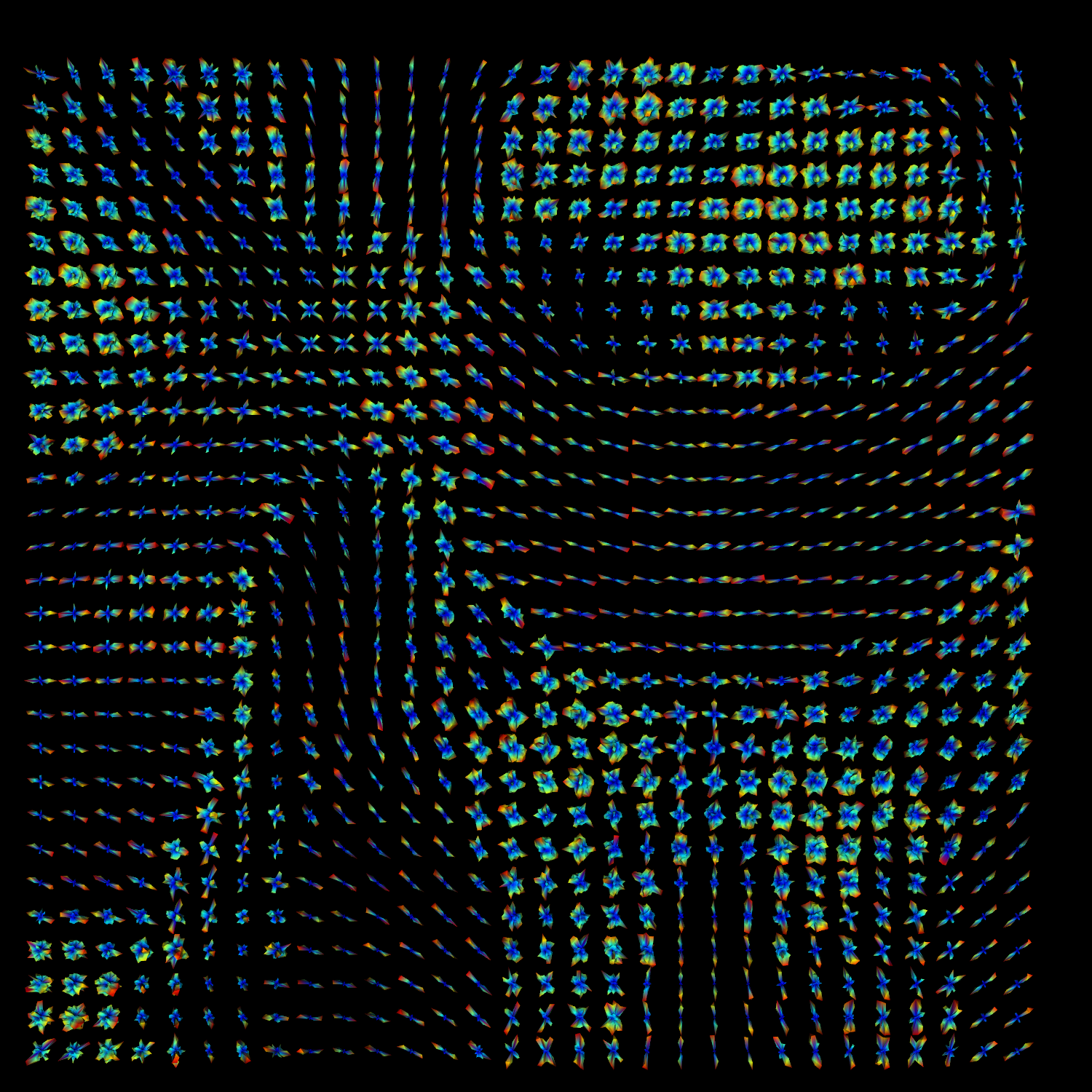}
        };
        \RelativeSpy{rw-spy3}{rw-csd-quadratic}{(.76,.886)}{(1.07,0.73)}
        \RelativeSpy{rw-spy4}{rw-csd-quadratic}{(.275,.532)}{(1.07,0.265)}
        
        \node[inner sep=0,below=0.1cm of rw-csd-quadratic] (rw-csd-W1) {%
            \includegraphics[%
                trim=10 10 57 57,clip,width=\picwidth
            ]{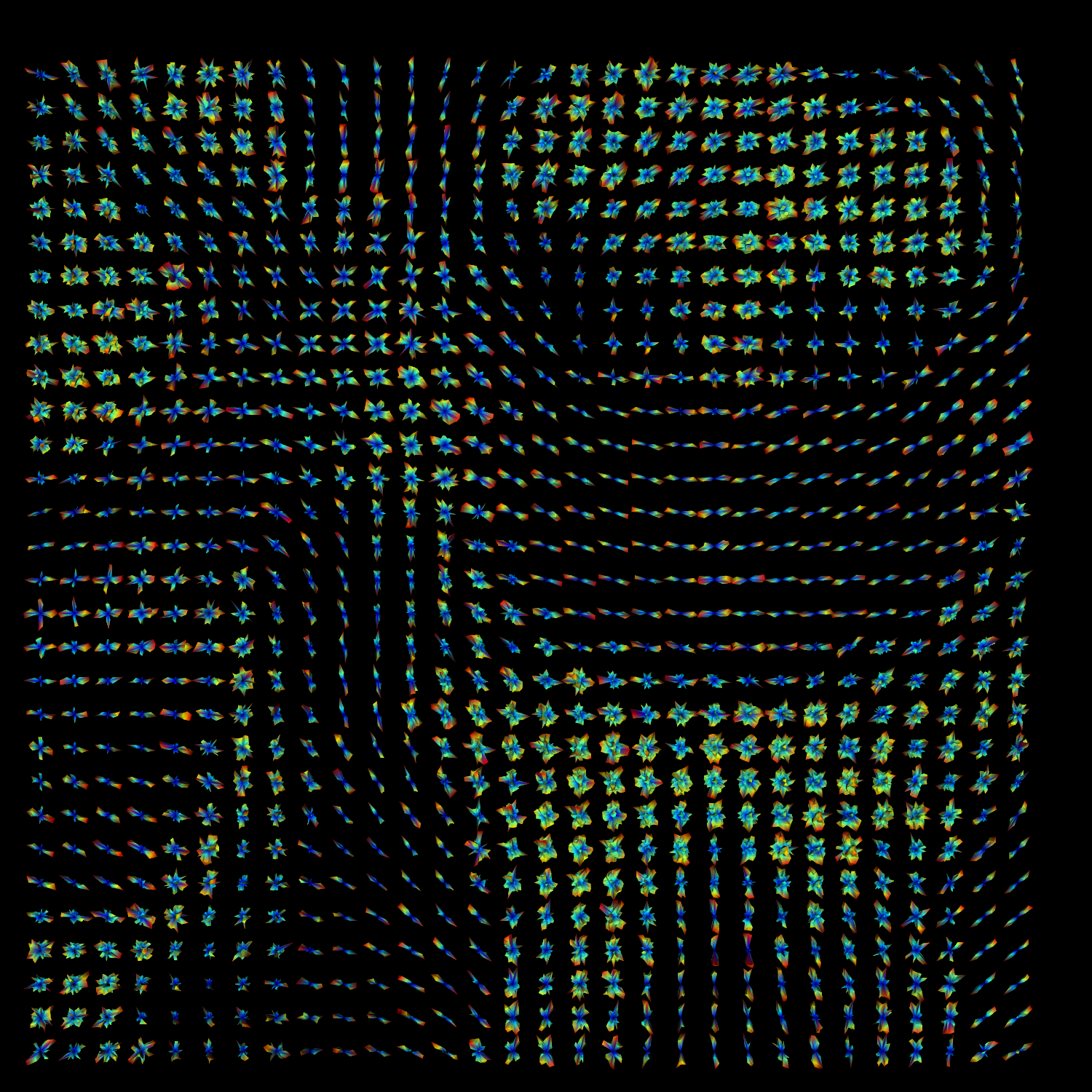}
        };
        \RelativeSpy{rw-spy5}{rw-csd-W1}{(.76,.886)}{(1.07,0.73)}
        \RelativeSpy{rw-spy6}{rw-csd-W1}{(.275,.532)}{(1.07,0.265)}
    \end{tikzpicture}
    \caption{%
        ODF image of the corpus callosum, reconstructed with CSD from HARDI data
        of the human brain \protect\cite{rokem2013}.
        \emph{Top:} Noisy input.
        \emph{Middle:} Restored using $L^2$-$\TV$ model ($\lambda=0.6$).
        \emph{Bottom:} Restored using $W_1$-$\TV$ model ($\lambda=1.1$).
        The results do not show much difference: 
        Both models enhance contrast between regions of isotropic and anisotropic
        diffusion while the anisotropy of ODFs is conserved.
    }\label{fig:realworld-dataset}
\end{figure} %
\appendix\numberwithin{equation}{section}\normalsize
\section{Background from Functional Analysis and \\
         Measure Theory}\label{apdx:background}
In this appendix, we present the theoretical background for a rigorous
understanding of the notation and definitions underlying the notion of $\TV$ as
proposed in \eqref{eq:bv-valued-tv} and \eqref{eq:measure-valued-tv}.
Subsection \ref{apdx:bv-bv} is concerned with Banach-space valued functions and
subsection \ref{apdx:kr} focuses on the special case of measure-valued functions.
\subsection{Banach Space-Valued Functions of Bounded Variation}\label{apdx:bv-bv}
This subsection introduces a function space on which the formulation of $\TV$ as
given in \eqref{eq:bv-valued-tv} is well-defined.

Let $(V, \|\cdot\|_V)$ be a real Banach space with (topological) dual space
$V^*$, i.e., $V^*$ is the set of bounded linear operators from $V$ to $\R$.
The dual pairing is denoted by $\langle p, v \rangle := p(v)$ whenever
$p \in V^*$ and $v \in V$.

We say that $u\colon \Omega \to V$ is \emph{weakly~measurable} if
$x \mapsto \langle p, u(x) \rangle$ is measurable for each $p \in V^*$ and say
that $u \in L_w^\infty(\Omega, V)$ if $u$ is weakly~measurable and essentially
bounded in $V$, i.e.,
\begin{equation}\label{eq:l-inf-w-norm}
    \|u\|_{\infty,V} := \esssup_{x \in \Omega} \|u(x)\|_V < \infty.
\end{equation}
Note that the essential supremum is well-defined even for non-measurable
functions as long as the measure is complete. In our case, we assume the
Lebesgue measure on $\Omega$ which is complete.

The following Lemma ensures that the integrand in \eqref{eq:bv-valued-tv} is
measurable.
\begin{lemma}\label{lem:measurability1}
    Assume that $u\colon \Omega \to V$ is weakly~measurable and
    $p\colon \Omega \to V^*$ is weakly*~continuous, i.e., for each $v \in V$,
    the map $x \mapsto \langle p(x), v \rangle$ is continuous.
    Then the map $x \mapsto \langle p(x), u(x) \rangle$ is measurable.
\end{lemma}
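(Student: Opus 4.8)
The plan is to avoid working with the two varying arguments $p(x)$ and $u(x)$ simultaneously, and instead to decouple them by introducing an auxiliary function of two independent variables that is afterwards restricted to the diagonal. Concretely, I would set
\[
    F\colon \Omega\times\Omega \to \R, \qquad F(x,y) := \langle p(x), u(y)\rangle,
\]
so that the quantity of interest becomes the diagonal restriction $x \mapsto \langle p(x), u(x)\rangle = F(x,x)$.

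It is worth recording why the most naive approach fails, as this pinpoints the real obstacle. One would like to partition $\Omega$ into small pieces and replace $p(x)$ by a fixed value $p(x_0)$ on each piece, using the weakly measurable map $x \mapsto \langle p(x_0), u(x)\rangle$ as an approximation. Controlling the error $\langle p(x)-p(x_0), u(x)\rangle$ uniformly in $x$ would, however, require $\|p(x)-p(x_0)\|_{V^*}$ to be small, i.e. \emph{norm} continuity of $p$, whereas weak* continuity only yields smallness of $\langle p(x)-p(x_0), v\rangle$ for each \emph{fixed} $v$. The diagonal construction is precisely what circumvents this gap.

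The key observation is that $F$ is a Carathéodory function. For each fixed $y$ the vector $u(y)\in V$ is fixed, so $x \mapsto F(x,y)=\langle p(x),u(y)\rangle$ is continuous by the weak* continuity of $p$; for each fixed $x$ the functional $p(x)\in V^*$ is fixed, so $y \mapsto F(x,y)=\langle p(x),u(y)\rangle$ is measurable by the weak measurability of $u$. Since $\Omega\subseteq\R^d$ is a separable metric space in the first variable, such a function is jointly measurable with respect to $\mathcal{B}(\Omega)\otimes\IL$, where $\mathcal{B}(\Omega)$ is the Borel $\sigma$-algebra and $\IL$ denotes the Lebesgue $\sigma$-algebra on the second copy of $\Omega$. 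The standard argument approximates $F$ in its first variable by functions that are constant on the cells of a countable Borel partition of $\Omega$ into sets of vanishing diameter: each approximant is a countable sum of products $\mathbf{1}_{C_n}(x)\,F(x_n,y)$ of a Borel indicator in $x$ with a measurable slice in $y$, hence jointly measurable, and the approximants converge pointwise to $F$ by continuity in the first variable.

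It then remains to restrict to the diagonal. The diagonal map $\Delta\colon \Omega\to\Omega\times\Omega$, $\Delta(x)=(x,x)$, is measurable from the Lebesgue $\sigma$-algebra on $\Omega$ into $\mathcal{B}(\Omega)\otimes\IL$, because on generators $\Delta^{-1}(B\times A)=B\cap A$ is Lebesgue measurable whenever $B$ is Borel and $A$ is Lebesgue measurable. Hence $x\mapsto \langle p(x),u(x)\rangle = F(\Delta(x))$ is measurable as a composition, which is the claim. I expect the joint measurability step to be the main obstacle, since separate continuity and measurability do not in general force joint measurability; it is exactly the separability of $\Omega$, together with the slice-wise approximation, that makes the Carathéodory argument go through. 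A secondary point requiring care is bookkeeping of the Borel versus Lebesgue $\sigma$-algebras, so that the diagonal restriction indeed stays measurable. Notably, no boundedness of $u$ and no heavier machinery (such as Pettis' theorem) are needed.
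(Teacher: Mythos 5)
Your proof is correct and follows essentially the same route as the paper: both introduce the two-variable function $F(x,y)=\langle p(x),u(y)\rangle$, observe it is a Carath\'eodory function (continuous in $x$ by weak* continuity of $p$, measurable in $y$ by weak measurability of $u$), and recover the claim on the diagonal. The only difference is that you unpack the standard Carath\'eodory composition lemma (joint measurability via piecewise-constant approximation in the continuous variable, then measurability of the diagonal map), whereas the paper simply cites it.
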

\begin{proof}
    Define $f\colon \Omega \times \Omega \to \R$ via
    \begin{equation}
        f(x, \xi) := \langle p(x), u(\xi) \rangle.
    \end{equation}
    Then $f$ is continuous in the first and measurable in the second variable.
    In the calculus of variations, functions with this property are called
    Carathéodory functions and have the property that $x \mapsto f(x,g(x))$ is
    measurable whenever $g\colon \Omega \to \Omega$ is
    measurable, which is proven by approximation of $g$ as the pointwise
    limit of simple functions~\cite[Prop. 3.7]{dacorogna2008}.
    In our case we can simply set $g(x) := x$, which is measurable, and the
    assertion follows.
\qed
\end{proof}
\subsection{Wasserstein Metrics and the KR Norm}\label{apdx:kr}
This subsection is concerned with the definition of the space of measures
$\KR(X)$ and the isometric embedding $\IP(X) \subset \KR(X)$ underlying the
formulation of $\TV$ given in \eqref{eq:measure-valued-tv}.

By $\IM(X)$ and $\IP(X) \subset \IM(X)$, we denote the sets of
signed Radon measures and Borel probability measures supported on $X$.
$\IM(X)$ is a vector space \cite[p.~360]{Hewitt1965} and a Banach space if
equipped with the norm
\begin{equation}
    \|\mu\|_\IM := \int_X \dd |\mu|,
\end{equation}
so that a function $u\colon \Omega \to \IP(X) \subset \IM(X)$ is Banach
space-valued (i.e., $u$ takes values in a Banach space).
If we define $C(X)$ as the space of continuous functions on $X$ with norm
$\|f\|_{C} := \sup_{x \in X} |f(x)|$, under the above assumptions on $X$,
$\IM(X)$ can be identified with the (topological) dual space of $C(X)$ with
dual pairing
\begin{equation}\label{eq:dual-pairing}
    \langle \mu, p \rangle := \int_{X} p \dd \mu,
\end{equation}
whenever $\mu \in \IM(X)$ and $p \in C(X)$, as proven in
\cite[p.~364]{Hewitt1965}.
Hence, $\IP(X)$ is a bounded subset of a dual space.

We will now see that additionally, $\IP(X)$ can be regarded as subset of a
Banach space which is a \emph{predual} space (in the sense that its dual space
can be identified with a ``meaningful'' function space) and which metrizes the
weak* topology of $\IM(X)$ on $\IP(X)$ by the optimal transport metrics we are
interested in.

For $q \geq 1$, the Wasserstein metrics $W_q$ on $\IP(X)$ are defined via
\begin{equation}
\begin{aligned}
    W_q(\mu, \mu') := &\left(\inf_{\gamma \in \Gamma(\mu,\mu')}
        \int_{X \times X}
             d(x,y)^q
        \dd \gamma(x,y) \right)^{1/q},
\end{aligned}
\end{equation}
where
\begin{equation}
    \Gamma(\mu,\mu') := \left\{ \gamma \in \IP(X \times X)\colon
        ~\proj_1 \gamma = \mu, ~\proj_2 \gamma = \mu'
    \right\}.
\end{equation}
Here, $\proj_i\gamma$ denotes the $i$-th marginal of the measure $\gamma$ on the
product space $X \times X$, i.e., $\proj_1\gamma(A) := \gamma(A \times X)$ and
$\proj_2\gamma(B) := \gamma(X \times B)$ whenever $A, B \subset X$.

Now, let $\Lip(X,\R^d)$ be the space of Lipschitz continuous functions on $X$
with values in $\R^d$ and $\Lip(X) := \Lip(X,\R^1)$.
Furthermore, denote the Lipschitz seminorm by $[\cdot]_{\Lip}$ so that
$[f]_{\Lip}$ is the Lipschitz constant of $f$.
Note that, if we fix some arbitrary $x_0 \in X$, the seminorm $[\cdot]_{\Lip}$
is actually a \emph{norm} on the set
\begin{equation}\label{eq:lip0}
    \Lip_0(X,\R^d) := \{ p \in \Lip(X,\R^d)\colon p(x_0) = 0 \}.
\end{equation}
The famous Kantorovich-Rubinstein duality \cite{kant1957} states that, for
$q=1$, the Wasserstein \emph{metric} is actually induced by a \emph{norm,} namely
$W_1(\mu, \mu') = \|\mu - \mu'\|_{\KR}$, where
\begin{equation}\label{eq:krdual}
    \|\nu\|_{\KR} := \sup\left\{
        \int_{X} p \dd \nu :
        ~p \in \Lip_0(X), ~[p]_{\Lip} \leq 1
    \right\},
\end{equation}
whenever $\nu \in \IM_0(X) := \{ \mu \in \IM\colon \int_X d\mu = 0\}$.
The completion $\KR(X)$ of $\IM_0(X)$ with respect to $\|\cdot\|_{\KR}$ is
a predual space of $(\Lip_0(X), [\cdot]_{\Lip})$
\cite[Thm. 2.2.2 and Cor. 2.3.5]{Weaver1999}.%
\footnote{The normed space $(\IM_0(X), \|\cdot\|_{\KR})$ is not complete unless
$X$ is a finite set \cite[Prop. 2.3.2]{Weaver1999}.
Instead, the completion of $(\IM_0(X), \|\cdot\|_{\KR})$ that we denote here
by $\KR(X)$ is isometrically isomorphic to the Arens-Eells space $AE(X)$.}
Hence, after subtracting a point mass at $x_0$, the set $\IP(X) - \delta_{x_0}$
is a subset of the Banach space $\KR(X)$, the predual of $\Lip_0(X)$.

Consequently, the embeddings
\begin{align}
    \IP(X) &\hookrightarrow (\KR(X), \|\cdot\|_{\KR}), \\
    \IP(X) &\hookrightarrow (\IM(X), \|\cdot\|_{\IM})
\end{align}
define two different topologies on $\IP(X)$.
The first embedding space $(\IM(X), \|\cdot\|_{\IM})$ is isometrically
isomorphic to the dual of $C(X)$.
The second embedding space $(\KR(X), \|\cdot\|_{\KR})$ is known to be a
metrization of the weak*-topology on the bounded subset $\IP(X)$ of the dual
space $\IM(X) = C(X)^*$~\cite[Thm. 6.9]{villani2009}.

Importantly, while $(\IP(X), \|\cdot\|_{\IM})$ is not separable unless $X$ is
discrete, $(\IP(X), \|\cdot\|_{\KR})$ is in fact compact, in particular complete
and separable \cite[Thm. 6.18]{villani2009} which is crucial in our result on
the existence of minimizers (Theorem~\ref{thm:existence}).
\section{Proof of $\TV$-Behavior for Cartoon-Like Functions}\label{apdx:cartoon-tv}
\begin{proof}[Prop.~\ref{prop:cartoon-tv}]
Let $p\colon \Omega \to (V^*)^d$ satisfy the constraints in \eqref{eq:bv-valued-tv}
and denote by $\nu$ the outer unit normal of $\partial U$.
The set $\Omega$ is bounded, $p$ and its derivatives are continuous and
$u \in L_w^\infty(\Omega, V)$ since the range of $u$ is finite and $U$, $\Omega$
are measurable.
Therefore all of the following integrals converge absolutely.
Due to linearity of the divergence,
\begin{align}
    &\langle \div p(x), u^\pm \rangle = \div (\langle p(\cdot), u^\pm \rangle), \\
    &\langle p(x), u^\pm \rangle := (
        \langle p_1(x), u^\pm \rangle, \dots, \langle p_d(x), u^\pm \rangle
    ) \in \R^d.
\end{align}
Using this property and applying Gauss' theorem, we compute
\begin{equation}
\begin{aligned}
    &\hspace{1.3em} \int_\Omega
        \langle -\div p(x), u(x) \rangle
    \dd x \\
    &= -\int_{\Omega \setminus U}
        \div (\langle p(x), u^- \rangle)
    \dd x
    - \int_{U}
        \div (\langle p(x), u^+ \rangle)
    \dd x \\
    &\hspace{-6.2pt}\overset{\text{Gauss}}{=} \int_{\partial U}
        \sum_{i=1}^d \langle \nu_i(x) p_i(x), {u^+} - {u^-} \rangle
    \dd \IH^{d-1}(x) \\
    &\leq \IH^{d-1}(\partial U) \cdot \|{u^+} - {u^-}\|_V.
\end{aligned}
\end{equation}
For the last inequality, we used our first assumption on $\|\cdot\|_{(V^*)^d}$
together with the norm constraint for $p$ in \eqref{eq:bv-valued-tv}.
Taking the supremum over $p$ as in \eqref{eq:bv-valued-tv}, we arrive at
\begin{equation}
    \TV_{V}(u) \leq \IH^{d-1}(\partial U) \cdot \|{u^+} - {u^-}\|_V.
\end{equation}

For the reverse inequality, let $\tilde p \in V^*$ be arbitrary with the
property $\|\tilde p\|_{V^*} \leq 1$
and $\phi \in C_c^1(\Omega, \R^d)$ satisfying $\|\phi(x)\|_2 \leq 1$.
Now, by \eqref{eq:product-norm-upper-bound}, the function
\begin{equation}
    p(x) := (\phi_1(x) \tilde p, \dots, \phi_d(x) \tilde p) \in (V^*)^d
\end{equation}
has the properties required in \eqref{eq:bv-valued-tv}.
Hence,
\begin{align}
    \TV_{V}(u) &\geq \int_\Omega
        \langle -\div p(x), u(x) \rangle
    \dd x \\
    &= -\int_\Omega \div \phi(x) \dd x
        \cdot \langle \tilde p, u^+ - u^- \rangle.
\end{align}
Taking the supremum over all $\phi \in C_c^1(\Omega, \R^d)$ satisfying
$\|\phi(x)\|_2 \leq 1$, we obtain
\begin{align}
    \TV_{V}(u) &\geq \Per(U, \Omega) \cdot \langle \tilde p, u^+ - u^- \rangle,
\end{align}
where $\Per(U, \Omega)$ is the perimeter of $U$ in $\Omega$.
In the theory of \emph{Caccioppoli sets} (or \emph{sets of finite perimeter}),
the perimeter is known to agree with $\IH^{d-1}(\partial U)$ for sets with $C^1$
boundary~\cite[p.~143]{ambrosio2000}.

Now, taking the supremum over all $\tilde p \in V^*$ with
$\|\tilde p\|_{V^*} \leq 1$ and using the fact that the canonical embedding of
a Banach space into its bidual is isometric, i.e.,
\begin{equation}
    \|u\|_V = \sup_{\|p\|_{V^*} \leq 1} \langle p, u \rangle,
\end{equation}
we arrive at the desired reverse inequality which concludes the proof.
\qed\end{proof}
\section{Proof of Rotational Invariance}\label{apdx:rot-invariance}
\begin{proof}[Prop.~\ref{prop:rot-invariance}]
    Let $R \in SO(d)$ and define
    \begin{align}
        R^T\Omega := \{ R^T x : x \in \Omega \},
        ~\tilde p(y) := R^T p(Ry).
    \end{align}
    In \eqref{eq:bv-valued-tv}, the norm constraint on $p(x)$ is
    equivalent to the norm constraint on $\tilde p(y)$ by condition
    \eqref{eq:product-norm-rot-inv}.
    Now, consider the integral transform
    \begin{align}
         \int_\Omega \langle -\div p(x), u(x) \rangle \dd x
         &= \int_{R^T\Omega} \langle -\div p(R y), \tilde u(y) \rangle \dd y \\
         &= \int_{R^T\Omega} \langle -\div \tilde p(y), \tilde u(y) \rangle \dd y.
    \end{align}
    where, using $R^T R = I$,
    \begin{align}
        \div \tilde p(y)
        &= \sum_{i=1}^d \partial_i \tilde p_i(y)
        = \sum_{i=1}^d \sum_{j=1}^d R_{ji} \partial_i \left[p_j(R y)\right] \\
        &= \sum_{i=1}^d \sum_{j=1}^d \sum_{k=1}^d
            R_{ji} R_{ki} \partial_k p_j(R y) \\
        &= \sum_{j=1}^d \sum_{k=1}^d \partial_k p_j(R y) \sum_{i=1}^d
            R_{ji} R_{ki} \\
        &= \sum_{j=1}^d \partial_j p_j(R y) = \div p(R y),
    \end{align}
    which implies $\TV_V(u) = \TV_V(\tilde u)$.
\qed
\end{proof}
\section{Discussion of Product Norms}\label{apdx:product-norms}
There is one subtlety about formulation \eqref{eq:bv-valued-tv} of the total
variation:
The choice of norm for the product space $(V^*)^d$ affects the properties of our
total variation seminorm.
\subsection{Product Norms as Required in Prop.~\ref{prop:cartoon-tv}}
The following proposition gives some examples for norms that satisfy or fail
to satisfy the conditions \eqref{eq:product-norm-lower-bound} and
\eqref{eq:product-norm-upper-bound} in Prop.~\ref{prop:cartoon-tv} about
cartoon-like functions.

\begin{proposition}\label{prop:product-norms}
    The following norms for $p \in (V^*)^d$ satisfy
    \eqref{eq:product-norm-lower-bound} and \eqref{eq:product-norm-upper-bound}
    for any normed space $V$:
    \begin{enumerate}
    \item For $s = 2$:
        \begin{equation}\label{eq:product-norm}
            \|p\|_{(V^*)^d,s} := \left(\sum_{i=1}^d \|p_i\|_{V^*}^s \right)^{1/s}.
        \end{equation}
    \item Writing $p(v):=(\langle p_1,v\rangle,\dots,\langle p_d,v\rangle)\in\R^d$,
        $v \in V$,
        \begin{equation}\label{eq:operator-norm}
            \|p\|_{\IL(V, \R^d)} := \sup_{\|v\|_V \leq 1} \|p(v)\|_{2}
        \end{equation}
    \end{enumerate}
    On the other hand, for any $1 \leq s < 2$ and $s > 2$, there is a normed
    space $V$ such that at least one of the properties
    \eqref{eq:product-norm-lower-bound}, \eqref{eq:product-norm-upper-bound} is
    not satisfied by the corresponding product norm \eqref{eq:product-norm}.
\end{proposition}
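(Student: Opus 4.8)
The plan is to dispatch the two positive claims (parts 1 and 2) by direct verification, relying only on the Cauchy--Schwarz inequality in $\R^d$ and the defining properties of the dual and operator norms, and then to establish the sharpness claim by identifying, separately in the regimes $s<2$ and $s>2$, which of the two conditions becomes the binding one. The positive parts are essentially computational; the genuinely interesting content is the construction of the counterexamples, and there the key is a good reduction rather than a hard estimate.

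For the $\ell^2$ product norm \eqref{eq:product-norm} with $s=2$, I would prove \eqref{eq:product-norm-lower-bound} by bounding $|\langle p_i,v\rangle|\le\|p_i\|_{V^*}\|v\|_V$ termwise and then applying Cauchy--Schwarz to the vectors $(|x_i|)_i$ and $(\|p_i\|_{V^*})_i$, which yields exactly $\|x\|_2\,\|p\|_{(V^*)^d,2}\,\|v\|_V$; and I would prove \eqref{eq:product-norm-upper-bound} by substituting $p_i=x_i q$ and computing $\|(x_1 q,\dots,x_d q)\|_{(V^*)^d,2}=\|q\|_{V^*}\|x\|_2$, so that the bound in fact holds with equality. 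For the operator norm \eqref{eq:operator-norm}, the lower bound follows from $\bigl|\sum_i x_i\langle p_i,v\rangle\bigr|=\bigl|\sum_i x_i\,(p(v))_i\bigr|\le\|x\|_2\,\|p(v)\|_2\le\|x\|_2\,\|p\|_{\IL(V,\R^d)}\|v\|_V$ (Cauchy--Schwarz in $\R^d$ followed by the definition of the operator norm), while the upper bound follows from the observation that for $p_i=x_i q$ one has $p(v)=\langle q,v\rangle\,x$, so $\|p\|_{\IL(V,\R^d)}=\|q\|_{V^*}\|x\|_2$, again with equality. Since none of these arguments uses any structure of $V$, both parts hold for an arbitrary normed space.

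For the sharpness statement I would start from the reduction that drives everything: for the norm \eqref{eq:product-norm} the diagonal choice $p_i=x_i q$ gives $\|(x_1 q,\dots,x_d q)\|_{(V^*)^d,s}=\|q\|_{V^*}\,\|x\|_s$, so \eqref{eq:product-norm-upper-bound} is equivalent to the purely finite-dimensional inequality $\|x\|_s\le\|x\|_2$ for all $x\in\R^d$. As this holds exactly when $s\ge2$, for every $1\le s<2$ the upper bound already fails for \emph{every} nontrivial $V$: with $d\ge2$, $x=(1,1,0,\dots,0)$ and any $q\neq0$ one gets $\|x\|_s=2^{1/s}>2^{1/2}=\|x\|_2$. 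For $s>2$ the upper bound survives, so the lower bound \eqref{eq:product-norm-lower-bound} must be the one that breaks, and I would exhibit this with the scalar space $V=\R$ (so $V^*=\R$, $\langle p_i,v\rangle=p_i v$, and $\|\cdot\|_{(V^*)^d,s}$ is the ordinary $\ell^s$ norm). Then \eqref{eq:product-norm-lower-bound} reduces, after taking $v=1$, to $\bigl|\sum_i x_i p_i\bigr|\le\|x\|_2\,\|p\|_s$, and the choice $x=p=(1,1,0,\dots,0)$ gives left-hand side $2$ against right-hand side $2^{1/2}\cdot2^{1/s}=2^{1/2+1/s}<2$, violating the inequality.

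The main point is conceptual rather than computational: recognizing the asymmetry between the two regimes. For $s<2$ the obstruction lives in the upper bound and is nothing but the comparison $\|x\|_s>\|x\|_2$ on the diagonal elements, so it is insensitive to the choice of $V$; for $s>2$ the upper bound is fine and one must instead defeat the lower bound, which is cleanest after collapsing $V$ to the scalars, where the obstruction becomes the failure of $\|p\|_2\le\|p\|_s$. Both constructions require $d\ge2$, which is harmless, since for $d=1$ all $\ell^s$ product norms coincide and the distinction is vacuous.
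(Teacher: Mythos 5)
Your proof is correct and follows essentially the same route as the paper: Cauchy--Schwarz plus the equality $\|(x_1q,\dots,x_dq)\|=\|x\|_2\|q\|_{V^*}$ for the two positive claims, and the same counterexamples ($x=p=(1,1)$, $v=1$, $V=\R$ breaking the lower bound for $s>2$; $x=(1,1)$, $q\ne 0$ breaking the upper bound for $s<2$). Your added observation that the upper bound reduces to $\|x\|_s\le\|x\|_2$ and hence fails for every nontrivial $V$ when $s<2$ is a slightly cleaner framing of the same computation, and your remark that the counterexamples need $d\ge 2$ is accurate but harmless.
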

\begin{remark}
    In the finite-dimensional Euclidean case $V = \R^n$ with norm $\|\cdot\|_2$,
    we have $(V^*)^d = \R^{d,n}$, thus $p$ is matrix-valued and
    $\|\cdot\|_{\IL(V, \R^d)}$ agrees with the spectral norm
    $\|\cdot\|_\sigma$.
    The norm defined in \eqref{eq:product-norm} is the Frobenius norm
    $\|\cdot\|_F$ for $s=2$.
\end{remark}
\begin{proof}[Prop.~\ref{prop:product-norms}]
    By Cauchy-Schwarz,
    \begin{align}
        \left|\textstyle{\sum_{i=1}^d} x_i \langle p_i, v \rangle\right|
        &\leq \|x\|_2 \left(
                \textstyle{\sum_{i=1}^d} \left|\langle p_i, v \rangle\right|^2
            \right)^{1/2} \\
        &\leq \|x\|_2 \left(
                \textstyle{\sum_{i=1}^d} \|p_i\|_{V^*}^2 \|v\|_V^2
            \right)^{1/2} \\
        &\leq \|x\|_2 \|v\|_V \left(
                \textstyle{\sum_{i=1}^d} \|p_i\|_{V^*}^2
            \right)^{1/2},
    \end{align}
    whenever $p \in (V^*)^d$, $v \in V$, and $x \in \R^d$.
    Similarly, for each $q \in V^*$,
    \begin{equation}
        \left(
            \textstyle{\sum_{i=1}^d} \|x_i q\|_{V^*}^2
        \right)^{1/2}
        = \|x\|_2 \|q\|_{V^*}.
    \end{equation}
    Hence, for $s = 2$, the properties \eqref{eq:product-norm-lower-bound} and
    \eqref{eq:product-norm-upper-bound} are satisfied by the product norm
    \eqref{eq:product-norm}.

    For the operator norm \eqref{eq:operator-norm}, consider
    \begin{align}
        \left|\textstyle{\sum_{i=1}^d} x_i \langle p_i, v \rangle\right|
        &\leq \|x\|_2 \left(
                \textstyle{\sum_{i=1}^d} \left|\langle p_i, v \rangle\right|^2
            \right)^{1/2} \\
        &= \|x\|_2 \|p(v)\|_2 \\
        &\leq \|x\|_2 \|p\|_{\IL(V, \R^d)} \|v\|_V,
    \end{align}
    which is property \eqref{eq:product-norm-lower-bound}.
    On the other hand, \eqref{eq:product-norm-upper-bound} follows from
    \begin{align}
        \|(x_1 q, \dots, x_d q)\|_{\IL(V, \R^d)}
        &= \sup_{\|v\|_V \leq 1} \left(
            \textstyle{\sum_{i=1}^d} |x_i q(v)|^2
        \right)^{1/2} \\
        &= \|x\|_2 \sup_{\|v\|_V \leq 1} |q(v)| \\
        &= \|x\|_2 \|q\|_{V^*}.
    \end{align}

    Now, for $s > 2$, property \eqref{eq:product-norm-lower-bound} fails for
    $d = 2$, $V = V^* = \R$, $p = x = (1,1)$ and $v = 1$ since
    \begin{align}
        \left|\sum_{i=1}^d x_i \langle p_i, v \rangle\right|
        &= 2 > 2^{1/2} \cdot 2^{1/s} = \|x\|_2 \|p\|_{(V^*)^d,s} \|v\|_V.
    \end{align}
    For $1 \leq s < 2$, consider $d = 2$, $V^* = \R$, $q = 1$ and $x = (1,1)$,
    then
    \begin{align}
        \|(x_1 q, \dots, x_d q)\|_{(V^*)^d,s}
        &= 2^{1/s} > 2^{1/2} = \|x\|_2 \|q\|_{V^*},
    \end{align}
    which contradicts property \eqref{eq:product-norm-upper-bound}.
\qed
\end{proof}
\subsection{Rotationally Symmetric Product Norms}
For $V = (\R^n, \|\cdot\|_2)$, property \eqref{eq:product-norm-rot-inv}
in Prop.~\ref{prop:rot-invariance} is satisfied by the Frobenius norm as well
as the spectral norms on $(V^*)^d = \R^{d,n}$.
In general, the following proposition holds:
\begin{proposition}
    For any normed space $V$, the rotational invariance
    property~\eqref{eq:product-norm-rot-inv} is satisfied by the operator norm
    \eqref{eq:operator-norm}.
    For any $s \in [1,\infty)$, there is a normed space $V$ such that
    property \eqref{eq:product-norm-rot-inv} does not hold for the product norm
    \eqref{eq:product-norm}.
\end{proposition}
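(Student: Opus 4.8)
The plan is to treat the two assertions separately: the first is essentially a one-line computation exploiting orthogonality, while the second is a counterexample construction in which the case $s=2$ requires care.

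\emph{Operator norm.} For the first claim I would use that the operator norm \eqref{eq:operator-norm} depends on $p$ only through the $\R^d$-valued evaluation map $v \mapsto p(v)$. The key observation is that the rotation of $p$ defined in \eqref{eq:product-norm-rot-inv} is intertwined with the linear action of $R$ on $\R^d$: writing out $(Rp)_i = \sum_j R_{ij} p_j$ and pairing with $v \in V$ gives
\begin{equation}
    (Rp)(v) = \Bigl( \textstyle\sum_j R_{1j} \langle p_j, v\rangle, \dots, \sum_j R_{dj}\langle p_j, v\rangle \Bigr) = R\,\bigl(p(v)\bigr),
\end{equation}
where on the right $R \in SO(d)$ acts as an orthogonal matrix on $p(v) \in \R^d$. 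Since orthogonal matrices preserve the Euclidean norm, $\|R\,(p(v))\|_2 = \|p(v)\|_2$ for every $v$, and taking the supremum over $\|v\|_V \le 1$ yields $\|Rp\|_{\IL(V,\R^d)} = \|p\|_{\IL(V,\R^d)}$. This is the only step where orthogonality enters, and it explains why the inner $2$-norm in \eqref{eq:operator-norm} is essential.

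\emph{Product norm.} For the second claim I need, for each fixed $s$, a normed space $V$ for which \eqref{eq:product-norm-rot-inv} fails. For $s \ne 2$ the one-dimensional choice $V = \R$ already suffices, since then $\|p_i\|_{V^*} = |p_i|$ and the product norm \eqref{eq:product-norm} is the $\ell^s$-norm on $\R^d$, which is not $SO(d)$-invariant unless $s = 2$. The delicate case is $s = 2$, where this naive choice fails precisely because the $\ell^2$-norm \emph{is} rotationally invariant; heuristically, for $s = 2$ the product norm is $SO(d)$-invariant exactly when $\|\cdot\|_{V^*}$ satisfies the parallelogram law, which points toward choosing $V$ with a non-Euclidean dual. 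The cleanest route I see avoids case distinctions by giving a single example valid for all $s$: take $d = 2$ and $V = (\R^2, \|\cdot\|_1)$, so that $V^* = (\R^2, \|\cdot\|_\infty)$. With $p_1 = (1,0)$ and $p_2 = (0,1)$ one has $\|p\|_{(V^*)^2,s} = 2^{1/s}$, whereas rotating by $\theta = \pi/4$ replaces the pair by $\tfrac{1}{\sqrt2}(1,-1)$ and $\tfrac{1}{\sqrt2}(1,1)$, each of $\|\cdot\|_\infty$-norm $1/\sqrt2$, so that $\|Rp\|_{(V^*)^2,s} = 2^{1/s - 1/2}$. These two values differ by the fixed factor $\sqrt2$ for every $s \in [1,\infty)$, contradicting \eqref{eq:product-norm-rot-inv}.

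The main obstacle is conceptual rather than computational: one has to realize that the obvious scalar example degenerates at $s = 2$, and that rotational invariance of the $\ell^s$-sum really hinges on the inner-product structure of $V^*$. Once the non-Hilbertian dual $(\R^2, \|\cdot\|_\infty)$ is selected, the remaining verification is elementary arithmetic, and the observation that the discrepancy factor $\sqrt2$ does not depend on $s$ is what makes a uniform construction possible.
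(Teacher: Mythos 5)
Your proof is correct and follows essentially the same route as the paper's: the operator-norm claim via the intertwining $(Rp)(v)=R(p(v))$ and rotational invariance of $\|\cdot\|_2$, and the counterexample with $d=2$, $V=(\R^2,\|\cdot\|_1)$, $p_1=(1,0)$, $p_2=(0,1)$ — the paper merely rotates by $\pi/3$ (discrepancy factor $\sqrt{3}/2$) where you rotate by $\pi/4$ (factor $1/\sqrt{2}$). Your added observation that $V=\R$ already handles $s\neq 2$ is a nice aside but not needed, since the single example covers all $s\in[1,\infty)$ uniformly.
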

\begin{proof}
    By definition of the operator norm and rotational invariance of the
    Euclidean norm $\| \cdot \|_2$,
    \begin{align}
        \|Rp\|_{\IL(V, \R^d)} &= \sup_{\|v\|_V \leq 1} \|Rp(v)\|_{2} \\
        &= \sup_{\|v\|_V \leq 1} \|p(v)\|_{2} = \|p\|_{\IL(V, \R^d)}.
    \end{align}

    For the product norms \eqref{eq:product-norm}, without loss of generality,
    we consider the case $d = 2$, $V := (\R^2, \|\cdot\|_1)$, $p_1 = (1,0)$,
    $p_2 = (0,1)$ and
    \begin{equation}
        R := \begin{pmatrix}
            1/2 & -\sqrt{3}/2 \\
            \sqrt{3}/2 & 1/2
        \end{pmatrix} \in SO(2).
    \end{equation}
    Then $V^* := (\R^2, \|\cdot\|_\infty)$ and
    \begin{equation}
        \|p\|_{(V^*)^d,s} = \left(
            \textstyle{\sum_{i=1}^2} \|p_i\|_\infty^s
            \right)^{1/s}
        = 2^{1/s}
    \end{equation}
    whereas
    \begin{equation}
        (Rp)_1 = (1/2, -\sqrt{3}/2), ~(Rp)_2 = (\sqrt{3}/2, 1/2),
    \end{equation}
    \begin{align}
        \|Rp\|_{(V^*)^d,s} &= \left(
            \textstyle{\sum_{i=1}^2} (\sqrt{3}/2)^s
        \right)^{1/s} \\
        &= 2^{1/s} \cdot \sqrt{3}/2 \neq 2^{1/s} = \|p\|_{(V^*)^d,s},
    \end{align}
    for any $1 \leq s < \infty$.
\qed
\end{proof}
\subsection{Product Norms on $\Lip_0(X)$}\label{apdx:lip-product-norm}
We conclude our discussion about product norms on $(V^*)^d$ with the special case
of $V = \KR(X)$:
For $p \in [\Lip_0(X)]^d$, the most natural choice is
\begin{equation}\label{eq:lip-norm-Rd}
    [p]_{\Lip(X,\R^d)} := \sup_{z \neq z'} \frac{\|p(z) - p(z')\|^2_2}{d(z,z')},
\end{equation}
which is automatically rotationally invariant.
On the other hand, the product norm defined in \eqref{eq:product-norm} (with
$s=2$), namely $\sqrt{\sum_{i=1}^d [p_i]_{\Lip}^2}$, is not rotationally
invariant for general metric spaces $X$.
However, in the special case $X \subset (\R^n, \|\cdot\|_2)$ and
$p \in C^1(X,\R^d)$, the norms \eqref{eq:lip-norm-Rd} and \eqref{eq:product-norm}
coincide with $\sup_{z\in X} \|Dp(z)\|_\sigma$ (spectral norm of the Jacobian)
and $\sup_{z\in X} \|Dp(z)\|_F$ (Frobenius norm of the Jacobian) respectively,
both satisfying rotational invariance.
\section{Proof of Non-Uniqueness}\label{apdx:non-uniqueness}
\begin{proof}[Prop.~\ref{prop:non-uniqueness}]
    Let $u \in L_w^\infty(\Omega, \IP(X))$.
    With the given choice of $X$, there exists a measurable function
    $\tilde u\colon \Omega \to [0,1]$ such that
    \begin{equation}
        u(x) = \tilde u(x) \delta_0 + (1 - \tilde u(x)) \delta_1.
    \end{equation}
    The measurability of $\tilde u$ is equivalent to the weak measurability of
    $u$ by definition:
    \begin{align}
        \langle p, u(x) \rangle
        &= \tilde u(x) \cdot p_0 + (1 - \tilde u(x)) \cdot p_1 \\
        &= \tilde u(x) \cdot (p_0 - p_1) + p_1.
    \end{align}
    The constraint
    \begin{equation}
        p \in C_c^1(\Omega, [\Lip_0(X)]^d), ~[p(x)]_{\Lip(X,\R^d)} \leq 1
    \end{equation}
    from the definition of $\TV_{\KR}$ in \eqref{eq:measure-valued-tv}
    translates to
    \begin{equation}
        p_0, p_1 \in C_c(\Omega,\R^d),~\|p_0(x) - p_1(x)\|_2 \leq 1.
    \end{equation}
    Furthermore,
    \begin{align}
        &\langle -\div p(x), u(x) \rangle \\
        &= -\div p_0(x) \cdot \tilde u(x) - \div p_1(x) \cdot (1-\tilde u(x)) \\
        &= -\div (p_0 - p_1)(x) \cdot \tilde u(x) - \div p_1(x).
    \end{align}
    By the compact support of $p_1$, the last term vanishes when integrated over
    $\Omega$.
    Consequently,
    \begin{align}
        \TV_{\KR}(u)
        &= \sup\left\{
            \int_\Omega -\div (p_0 - p_1) (x) \cdot \tilde u(x) \dd x : \right.\\
        &\left.\phantom{\sup\{\int_\Omega}\hspace{0.7cm}
            p_0, p_1 \in C_c(\Omega,\R^d),~\|(p_0 - p_1)(x)\|_2 \leq 1
        \right\} \\
        &= \sup\left\{ \int_\Omega -\div p(x) \cdot \tilde u(x) \dd x : \right.\\
        &\quad\left.\phantom{\sup\{\int_\Omega}\hspace{2cm}
            p \in C_c(\Omega,\R^{d}),~\|p(x)\|_2 \leq 1
        \right\} \\
        &= \TV(\tilde u).
    \end{align}
    and therefore
    \begin{align}
        T_{\rho,\lambda}(u)
        &= \int_{\Omega \setminus U} \hspace{-8pt}\tilde u(x) \dd x
           + \int_U (1-\tilde u(x)) \dd x
           + \lambda\TV(\tilde u) \\
        &= \int_\Omega |\mathbf{1}_U(x) - \tilde u(x)| \dd x
           + \lambda \TV(\tilde u) \\
        &= \|\mathbf{1}_U - \tilde u\|_{L^1} + \lambda\TV(\tilde u).
    \end{align}
    Thus we have shown that the functional $T_{\rho,\lambda}$ is equivalent to
    the classical $L^1$-$\TV$ functional with the indicator function
    $\mathbf{1}_U$ as input data and evaluated at $\tilde u$ which is known to
    have non-unique minimizers for a certain choice of $\lambda$ \cite{Chan2005}.
\qed
\end{proof}
\section{Proof of Existence}\label{apdx:existence}
\subsection{Well-Defined Energy Funtional}
In order for the functional defined in \eqref{eq:T-definition} to be
well-defined, the mapping $x \mapsto \rho(x, u(x))$ needs to be measurable.
In the following Lemma, we show that this is the case under mild conditions on
$\rho$.
\begin{lemma}\label{lem:rho-measurable}
    Let $\rho\colon \Omega \times \IP(X) \to [0,\infty)$ be a globally bounded
    function that is measurable in the first and convex in the second variable,
    i.e., $x \mapsto \rho(x,\mu)$ is measurable for each $\mu \in \IP(X)$, and
    $\mu \mapsto \rho(x,\mu)$ is convex for each $x \in \Omega$.
    Then the map $x \to \rho(x,u(x))$ is measurable for every
    $u \in L_w^\infty(\Omega, \IP(X))$.
\end{lemma}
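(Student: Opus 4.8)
The plan is to first strengthen the measurability of $u$ so that it becomes an a.e.\ limit of finitely-valued functions, to verify the claim directly for such simple functions, and finally to pass to the limit using the regularity of $\rho(x,\cdot)$ that convexity provides.

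For the first step I would invoke the Pettis measurability theorem. Since $u \in L_w^\infty(\Omega, \IP(X))$, the map $x \mapsto \langle q, u(x)\rangle$ is measurable for every $q \in \Lip_0(X) = \KR(X)^*$, so $u$ is weakly measurable as a $\KR(X)$-valued function; and since $(\IP(X), \|\cdot\|_{\KR})$ is separable (Appendix~\ref{apdx:kr}), $u$ is essentially separably valued. Pettis' theorem then gives that $u$ is strongly measurable, hence the a.e.\ pointwise $W_1$-limit of simple functions $u_n = \sum_k \mu_k^n\, \mathbf{1}_{A_k^n}$ with $\mu_k^n \in \IP(X)$ and $A_k^n \subset \Omega$ measurable. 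For each fixed $n$,
\[
    x \mapsto \rho(x, u_n(x)) = \sum_k \rho(x, \mu_k^n)\, \mathbf{1}_{A_k^n}(x)
\]
is measurable, because each $x \mapsto \rho(x, \mu_k^n)$ is measurable by assumption. If $\mu \mapsto \rho(x,\mu)$ were continuous on $(\IP(X), W_1)$ for each $x$, then $\rho(x, u(x)) = \lim_n \rho(x, u_n(x))$ a.e., and measurability of the left-hand side would follow as an a.e.\ limit of measurable functions.

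The main obstacle is exactly this continuity, and it is where the convexity hypothesis must be used. For a bounded convex function the usual argument yields continuity on the interior of its domain, but $\IP(X)$ is a compact convex subset of the infinite-dimensional space $\KR(X)$ and therefore has empty interior; a priori, $\rho(x,\cdot)$ could jump upward at boundary configurations such as Dirac masses. The convex-analytic way around this is to represent $\rho(x,\cdot)$ (or its lower-semicontinuous convex envelope) as a supremum of affine minorants,
\[
    \rho(x, \mu) = \sup_{m} \big( \langle q_m(x), \mu\rangle + c_m(x) \big),
\]
where, exploiting the separability of the predual $\KR(X)$, the supremum can be taken over a countable family with $x \mapsto q_m(x) \in \Lip_0(X)$ and $c_m$ measurable. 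Each term $x \mapsto \langle q_m(x), u(x)\rangle + c_m(x)$ is then measurable --- for fixed $x$ the affine map is $W_1$-continuous, and $u$ is strongly measurable --- and a countable supremum of measurable functions is measurable. Reconciling this envelope with $\rho$ itself at the (at most boundary) points where they might differ is the delicate part one has to argue carefully.

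Finally, I would emphasize that for the data term that actually motivates the lemma, $\rho(x,\mu) = W_1(f(x),\mu)$, the obstacle disappears: by the triangle inequality for $W_1$, the map $\mu \mapsto W_1(f(x),\mu)$ is $1$-Lipschitz on $(\IP(X), W_1)$, uniformly in $x$, hence continuous everywhere on $\IP(X)$. The limit $\rho(x, u_n(x)) \to \rho(x,u(x))$ then holds without qualification, and measurability of $x \mapsto \rho(x,u(x))$ follows immediately from the simple-function reduction of the first step. Thus convexity carries the statement in general, while Lipschitz continuity makes the argument transparent in the case of interest.
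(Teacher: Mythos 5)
Your overall architecture coincides with the paper's proof: the paper likewise invokes Pettis' measurability theorem together with the separability of the compact metric space $(\IP(X),W_1)$ to upgrade $u$ to a strongly measurable function, and then concludes via the Carath\'eodory composition theorem (citing the same mechanism as in Lemma~\ref{lem:measurability1}), whose proof is exactly your simple-function approximation. The only substantive difference is how continuity of $\rho(x,\cdot)$ with respect to $W_1$ is obtained: the paper closes this step in one line by citing the fact that bounded convex functions are locally Lipschitz (Clarke, Thm.~2.34), whereas you flag that step as suspect and attempt to route around it.

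Your scruple is legitimate --- $\IP(X)$ has empty interior in $\KR(X)$, and the local-Lipschitz theorem needs interior points; the indicator function of the set of Dirac masses is bounded, convex on $\IP(X)$, yet discontinuous at every Dirac mass. But your replacement argument has a genuine gap that cannot be patched as written: the countable supremum of continuous affine minorants recovers only the $W_1$-lower-semicontinuous convex envelope of $\rho(x,\cdot)$, and in the example just given that envelope is identically $0$ while $\rho$ equals $1$ on the whole ($W_1$-closed) set of Dirac masses, a set in which $u$ may take all of its values. The discrepancy is therefore not confined to a harmless exceptional set --- every point of $\IP(X)$ is a ``boundary point'' in $\KR(X)$ --- so measurability of the envelope composition says nothing about measurability of $x\mapsto\rho(x,u(x))$ itself; there is also an unaddressed measurable-selection issue in choosing the minorants $q_m(x)$, $c_m(x)$ measurably in $x$. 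In short, your proof is complete only under the additional assumption that $\rho(x,\cdot)$ is $W_1$-continuous (which, as the paper's remark after the lemma notes, is the property actually used, and which your final paragraph correctly verifies for the motivating data term $\rho(x,\mu)=W_1(f(x),\mu)$); in the general bounded-convex case neither your envelope argument nor, for that matter, the cited local-Lipschitz result closes the continuity step.
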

\begin{remark}
    As will become clear from the proof, the convexity condition can be replaced
    by the assumption that $\rho$ be continuous with respect to $(\IP(X), W_1)$
    in the second variable.
    However, in order to ensure weak* lower semi-continuity of the functional
    \eqref{eq:T-definition}, we will require convexity of $\rho$ in the
    existence proof (Thm.~\ref{thm:existence}) anyway.
    Therefore, for simplicity we also stick to the (stronger) convexity
    condition in Lemma~\ref{lem:rho-measurable}.
\end{remark}
\begin{remark}
    One example of a function satisfying the assumptions in
    Lemma~\ref{lem:rho-measurable} is given by
    \begin{equation}
        \rho(x,\mu) := W_1(f(x),\mu), ~x \in \Omega, ~\mu \in \IP(\IS^2).
    \end{equation}
    Indeed, boundedness follows from the boundedness of the Wasserstein metric
    in the case of an underlying bounded metric spaces (here $\IS^2$).
    Convexity in the second argument follows from the fact that the Wasserstein
    metric is induced by a norm \eqref{eq:krdual}.
\end{remark}
\begin{proof}[Lemma~\ref{lem:rho-measurable}]
    The metric space $(\IP(X), W_1)$ is compact, hence separable.
    By Pettis' measurability theorem
    \cite[Chapter VI, §1, No. 5, Prop. 12]{bourbaki2004}, weak and strong
    measurability coincide for separably-valued functions, so that
    $u$ is actually strongly measurable as a function with values in
    $(\IP(X),W_1)$.
    Note, however, that this does not imply strong measurability
    with respect to the norm topology of $(\IM(X), \|\cdot\|_{\IM})$ in general!

    As bounded convex functions are locally Lipschitz continuous
    \cite[Thm. 2.34]{clarke2013}, $\rho$ is continuous in the second variable
    with respect to $W_1$.
    As in the proof of Lemma \ref{lem:measurability1}, we now note that $\rho$
    is a Carathéodory function, for which compositions with measurable functions
    such as $x \mapsto \rho(x,u(x))$ are known to be measurable.
\qed
\end{proof}
\subsection{The Notion of Weakly*~Measurable Functions}
Before we can go on with the proof of existence of minimizers to
\eqref{eq:T-definition}, we introduce the notion of \emph{weak*~measurability}
because this will play a crucial role in the proof.

Analogously with the notion of weak~measurability and with
$L_{w}^\infty(\Omega, \KR(X))$ introduced above, we say that a measure-valued
function $u\colon \Omega \to \IM(X)$ is \emph{weakly*~measurable} if the mapping
\begin{equation}
    x \mapsto \int_X f(z) \dd u_x(z)\label{eq:weakstarmap}
\end{equation}
is measurable for each $f \in C(X)$.
$L_{w*}^\infty(\Omega, \IM(X))$ is defined accordingly as the space of
weakly*~measurable functions.

For functions $u\colon \Omega \to \IP(X)$ mapping onto the space of probability
measures, there is an immediate connection between weak* measurability and weak
measurability: $u$ is weakly~measurable if the mapping
\begin{equation}
x \mapsto \int_X p(z) \dd u_x(z)
\end{equation}
is measurable whenever $p \in \Lip_0(X)$.
However, since,  by the Stone-Weierstrass theorem, the Lipschitz functions
$\Lip(X)$ are dense in $(C(X), \|\cdot\|_{\infty})$~\cite[p.~198]{carothers2000},
both notions of measurability coincide for probability measure-valued functions
$u\colon \Omega \to \IP(X)$, so that
\begin{equation}
    L_w^\infty(\Omega, \IP(X)) = L_{w*}^\infty(\Omega, \IP(X)).
\end{equation}

However, as this equivalence does not hold for the larger spaces
$L_{w*}^\infty(\Omega, \IM(X))$ and $L_{w}^\infty(\Omega, \IM(X))$, it will be
crucial to keep track of the difference between weak and weak* measurability in
the existence proof.
\subsection{Proof of Existence}
\begin{proof}[Theorem~\ref{thm:existence}]
The proof is guided by the direct method from the calculus of variations.
The first part is inspired by the proof of the Fundamental Theorem for Young
measures as formulated and proven in~\cite{Ball1989}.

Let $u^k\colon \Omega \to \IP(X)$, $k \in \IN$, be a minimizing
sequence for $T_{\rho,\lambda}$, i.e.,
\begin{equation}
    T_{\rho,\lambda}(u^k) \to \inf_{u} T_{\rho,\lambda}(u)
        ~\text{ as }~
    k \to \infty.
\end{equation}
As $\IM(X)$ is the dual space of $C(X)$, $L_{w*}^\infty(\Omega, \IM(X))$ with
the norm defined in \eqref{eq:l-inf-w-norm} is dual to the Banach space
$L^1(\Omega, C(X))$ of Bochner integrable functions on $\Omega$ with values in
$C(X)$ \cite[p. 93]{IonescuTulcea1969}.
Now, $\IP(X)$ as a subset of $\IM(X)$ is bounded so that our sequence $u^k$ is
bounded in $L_{w*}^\infty(\Omega, \IM(X))$ (here we use again that
$L_{w*}^\infty(\Omega, \IP(X)) = L_{w}^\infty(\Omega, \IP(X))$).

Note that we get boundedness of our minimizing sequence ``for free'', without any
assumptions on the coercivity of $T_{\rho,\lambda}$!
Hence we can apply the Banach-Alaoglu theorem, which states that there exist
$u^\infty \in L_{w*}^\infty(\Omega, \IM(X))$ and a subsequence, also denoted by
$u^k$, such that
\begin{equation}
    u^k \overset{\ast}{\rightharpoonup} u^\infty
    \text{ in } L_{w*}^\infty(\Omega, \IM(X)).
\end{equation}
Using the notation in \eqref{eq:dual-pairing}, this means by definition
\begin{align}\label{eq:weak-star-conv}
    \int_\Omega \langle u^k(x), p(x) \rangle \dd x
    \to \int_\Omega \langle u^\infty(x), \,p(x) \rangle \dd x
    \quad\forall p \in L^1(\Omega, C(X)).
\end{align}

We now show that $u^\infty(x) \in \IP(X)$ almost everywhere, i.e., $u^\infty$
is a nonnegative measure of unit mass:
The convergence \eqref{eq:weak-star-conv} holds in particular for the choice
$p(x,s) := \phi(x)f(s)$, where $\phi \in L^1(\Omega)$ and $f \in C(X)$.
For nonnegative functions $\phi$ and $f$, we have
\begin{equation}
    \int_\Omega \phi(x) \langle u^k(x), f \rangle \dd x \geq 0
\end{equation}
for all $k$, which implies
\begin{equation}
    \int_\Omega \phi(x) \langle u^\infty(x), f \rangle \dd x \geq 0.
\end{equation}
Since this holds for all nonnegative $\phi$ and $f$, we deduce that $u^\infty(x)$
is a nonnegative measure for almost every $x \in \Omega$.
The choice $f(s) \equiv 1$ in \eqref{eq:weak-star-conv} shows that
$u^\infty$ has unit mass almost everywhere.

Therefore $u^\infty(x) \in \IP(X)$ almost everywhere and we have shown that
$u^\infty$ lies in the feasible set $L_{w}^\infty(\Omega, \IP(X))$.
It remains to show that $u^\infty$ is in fact a minimizer.

In order to do so, we prove weak* lower semi-con\-tinuity of $T_{\rho,\lambda}$.
We consider the two integral terms in the definition
\eqref{eq:T-definition} of $T_{\rho,\lambda}$ separately.
For the $\TV_{\KR}$ term, for any $p \in C_c^1(\Omega, \Lip(X,\R^d))$, we have
$\div p \in L^1(\Omega, C(X))$ so that
\begin{equation}
    \lim_{k\to\infty} \int_\Omega
        \langle \div u^k(x), p(x) \rangle
    \dd x
    = \int_\Omega
        \langle \div u^\infty(x), p(x) \rangle
    \dd x.
\end{equation}
Taking the supremum over all $p$ with $[p(x)]_{[\Lip(X)]^d} \leq 1$ almost
everywhere, we deduce lower semi-continuity of the regularizer:
\begin{equation}
    \TV_{\KR}(u^\infty) \leq \liminf_{k\to\infty} \TV_{\KR}(u^k).
\end{equation}
The data fidelity term $u \mapsto \int_\Omega \rho(x,u(x)) \dd x$ is convex and
bounded on the closed convex subset $L_w^\infty(\Omega, \IP(X))$ of the space
$L_{w*}^\infty(\Omega, \IM(X))$.
It is also continuous, as convex and bounded functions on normed spaces are
locally Lipschitz-continuous.
This implies weak* lower semi-continuity on $L_w^\infty(\Omega, \IP(X))$.

Therefore, the objective function $T_{\rho,\lambda}$ is weakly* lower
semi-continuous, and we obtain
\begin{equation}
    T_{\rho,\lambda}(u^\infty)
        \leq \lim\inf_{k\to\infty} T_{\rho,\lambda}(u^k)
\end{equation}
for the minimizing sequence $(u^k)$, which concludes the proof.
\qed
\end{proof}

\bibliographystyle{spmpsci}
\bibliography{paper.bib}

\end{document}